\begin{document}

\title{Reduced projection method for photonic moir\'e lattices
}


\author{Zixuan Gao      \and
        Zhenli Xu \and Zhiguo Yang 
}


\institute{Zixuan Gao     \and
          Zhenli Xu      \and
              Zhiguo Yang\at
              School of Mathematical Sciences, MOE-LSC and CMA-Shanghai, Shanghai Jiao Tong University, Shanghai 200240, China. \\
              \email{1270157606gzx@sjtu.edu.cn} 
}

\date{Received: date / Accepted: date}

\maketitle

\begin{abstract}
This paper presents a reduced projection method for the solution of quasiperiodic Schr\"{o}dinger eigenvalue problems for photonic moir\'e lattices. Using the properties of the Schr\"{o}dinger operator in higher-dimensional space via a projection matrix, we rigorously prove that the generalized Fourier coefficients of the eigenfunctions exhibit faster decay rate along a fixed direction associated with the projection matrix.  An efficient reduction strategy of the basis space is then proposed to reduce the degrees of freedom significantly. Rigorous error estimates of the proposed reduced projection method are provided, indicating that a small portion of the degrees of freedom is sufficient to achieve the same level of accuracy as the classical projection method. We present numerical examples of photonic moir\'e lattices in one and two dimensions to demonstrate the accuracy and efficiency of our proposed method. 
\keywords{Quasiperiodic problems \and Schr\"odinger eigenvalue problems \and reduced projection method \and Fourier method \and basis reduction.}
\subclass{65N35 \and 65N22 \and 65F05 \and 35J05}
\end{abstract}

\section{Introduction}
The quasiperiodic problems emerge naturally in a great many physical systems such as quasicrystals, many-body problems, and low-dimensional materials \cite{wang2020localization,cao2018unconventional,gao2023pythagoras,lu2019superconductors,zhang2021quasi}, and have found numerous applications in the areas of mechanics, acoustics, electronics, solid-state physics, and physics of matter waves \cite{bistritzer2011moire,o2016moire,hu2020moire}. Efficient and accurate numerical simulations of quasiperiodic problems play a critical role in  exploring and utilizing novel material properties. 

Though quasiperiodic systems are ubiquitous in mathematics and physics, the numerical method is not as straightforward as that of periodic systems. Specifically, quasiperiodic systems are space-filling ordered without decay and translational invariance \cite{jiang2024numerical}. {In recent years, there has been a growing interest in the research of the optical properties of moir\'e lattices, a prototype of quasicrystals, as evidenced by notable studies in \cite{fu2020optical,kartashov2021multifrequency,salakhova2021fourier} that have brought about significant breakthroughs in the field of optics. It is exhilarating to note that a localization-to-delocalization transition of eigenstates of moir\'{e} lattices in two dimensions is observed for the first time in both numerical simulations and experiments \cite{wang2020localization},  which paves a new way of controlling light at will. However, the localization of eigenstates as well as the phase transition for a high-dimensional case is not well explored, due to the exceedingly huge degrees of freedom and computational cost required.}

One of the widely used numerical approaches for solving these quasiperiodic problem is the periodic approximation method, also known as the crystalline approximant method \cite{davenport1946simultaneous,goldman1993quasicrystals,lifshitz1997theoretical}, which approximates the quasiperiodic function via a periodic function in a certain supercell.  Nevertheless, this method is proven to be of slow convergence and the simultaneous Diophantine approximation error does not  decay uniformly as the size of the supercell gradually increases.  In order to avoid the Diophantine error,  considerable efforts have been made. Rodriguez {\it et al.} \cite{rodriguez2008computation} introduced a numerical algorithm to compute the spectrum of photonic quasicrystals by raising the physical domain to higher dimensions. Jiang and Zhang \cite{jiang2014numerical,jiang2018numerical} proposed a projection method (PM), which treats a quasiperiodic function as a projection of a higher-dimensional periodic function. The PM has the advantage of  avoiding the simultaneous Diophantine approximation error and allowing for the convenient use of periodic boundary conditions for domain truncation.  Despite the high accuracy, the PM requires solving problems in higher dimensions, leading to significant increases in the degrees of freedom (DOF), computational cost, and memory consumption. For instance, when solving a $d$-dimensional quasiperiodic eigenvalue problem, the projection method raises the $d$-dimensional quasiperiodic domain to an $n$-dimensional periodic domain ($n$ is often twice as large as $d$). Correspondingly, the DOF of the PM is $O(N^n)$, where $N$ is the number of Fourier grid points in one direction. This makes the PM prohibitive for solving high-dimensional quasiperiodic problems. 

In order to mitigate the curse of dimensionality and improve significantly the capability for solving the problem of high-dimensional photonic moir\'e lattices, we propose  an efficient reduced projection method (RPM). This method is inspired by several pioneering works based on the PM for incommensurate problems. Wang {\it et al.} \cite{wang2022convergence} characterizes the density of states of Schr\"odinger operators in the weak sense for the incommensurate system and proposes numerical methods based on the planewave discretization and reciprocal space sampling. Zhou {\it et al.} \cite{zhou2019plane} propose a $k$-points sampling reduction technique under the planewave framework for the electronic structure-related eigenvalue problems of the incommensurate systems. Jiang {\it et al.} \cite{jiang2023accurately} propose a finite points recovery (FPR) method for non-smooth quasiperiodic systems. We highlight the novelties and main contributions of our work as follows:
\begin{itemize}
    \item We propose a fast algorithm for the PM for solving  quasiperiodic Schr\"{o}dinger eigenvalue problems describing photonic moir\'e lattices, which significantly reduces the DOF for the eigenvalue computation. The computational complexity of the proposed RPM for solving the first $k$ eigenpairs using the Krylov subspace method is reduced from $O(kN^{2n})$ to $O(kN^{2(n-d)}D^{2d})$.  
    \item We study the theoretical reliability of the RPM. We prove that the generalized Fourier coefficients of the eigenfunctions exhibit faster decay rate along a fixed direction associated with the projection matrix. Furthermore, a rigorous convergence analysis for the RPM is then presented to demonstrate the high efficiency of the proposed reduction approach.
    \item The RPM is applied to solve one-dimensional and two-dimensional photonic  moir\'e lattice  systems \cite{wang2020localization}. Numerical experiments showcase the effectiveness of the proposed RPM and demonstrate that it is promising for quasi periodic problems. 
\end{itemize} 

The rest of this paper is organized as follows. Section \ref{s2} presents some preliminary results on quasiperiodic functions. Section \ref{s3} proposes the RPM. By introducing the variational framework, the numerical method for quasiperiodic Schr\"odinger eigenvalue problems is described in details together with the error estimation. Section \ref{s4} presents numerical results to show the attractive performance of the algorithm. Section \ref{s5} makes conclusions with some closing remarks. 

\section{Preliminaries on quasiperiodic functions}\label{s2}

We denote $\mathbb R$, $\mathbb Q$, $\mathbb Z$,  $\mathbb{Z}^{+}$ as the spaces of real, rational, integer, and positive integer numbers, respectively. Let $L^2(\Omega)$ and $H^m(\Omega)$ ($m\in\mathbb{Z}, m\geq0$) be the usual square-integrable function space and Sobolev spaces following classic textbooks (see e.g. \cite{adams2003sobolev}).  Given that there would be a frequent conversion between periodic and quasiperiodic functions, we adopt the subscripts ``per'' and ``qp'', respectively, to differentiate the spaces they belong to and their associated inner products and norms. For instance, for a $d$-dimensional periodic function $F(\bm{z})\in L^2_{\rm per}([0,T]^d)$ with period $T$ in each dimension (dubbed as ``$T$-periodic function''), its corresponding inner product and norm are denoted by 
\begin{equation*}
    (F,G)_{\rm per}=\dfrac{1}{T^d}\int_{[0,T]^d}F(\bm z)\Bar{G}(\bm z){d}\bm{z},\quad \| F\|_{\rm per}=\sqrt{(F,F)_{\rm per}},
\end{equation*}
where $\Bar{G}$ is the complex conjugate of $G\in L^2_{\rm per}([0,T]^d)$. For notational convenience, we omit the subscript ``per'' for periodic functions, if no ambiguity occurs. 

To facilitate the development of efficient and accurate numerical methods for the quasiperiodic Schr\"{o}dinger eigenvalue problem, we begin with a brief exposition of definition for quasiperiodic functions and their relevant properties (see monographs \cite{bohr2018almost,levitan1982almost} for comprehensive discussions).
\begin{definition}\label{df1}
    A $d$-dimensional function $f(\bm{z})$ is quasiperiodic if there exists a $d\times n$ projection matrix $\mathbf{P}$ such that $F(\bm{x})=F(\mathbf{P}^\intercal\bm{z})=f(\bm{z})$ is an $n$-dimensional periodic function, where all columns of $\mathbf P$ are linearly independent over $\mathbb{Q}$. $F(\bm x)$ is called the parent function of $f(\bm z)$ with respect to $\mathbf P$. 
\end{definition}
It is worthwhile to point out that the projection matrix $\mathbf P$ is not unique. Throughout the paper, $\mathbf P$ is chosen such that $F(\bm x)$ is $2\pi$-periodic. Define the  mean value of a $d$-dimensional quasiperiodic function $f(\bm{z})$ by
    \begin{equation}
        \mathcal{M}(f)=\lim_{L\rightarrow\infty}\dfrac{1}{|L|^d}\int_{K}f(\bm{z}){d}\bm{z},
    \end{equation}
    where $K=\{\bm{z}\,| \,0\leq |\bm{z}_i|\leq L,i=1,\dots,d\}$. Correspondingly, one can define the square-integral quasiperiodic function space $L^2_{\rm qp}(\mathbb{R}^n)$ as
    \begin{equation}
L^2_{\rm qp}(\mathbb{R}^n):=\{f(\bm z) \,|\;\mathcal{M}(f\bar{f})<\infty   \},
    \end{equation}
with the inner product and norm defined by
\begin{equation}
(f,g)_{\rm qp}=\mathcal{M}(f\bar{g}),\quad \|f \|_{\rm qp}=\sqrt{  (f,f)_{\rm qp}   }\,,
\end{equation}    
where $f$ and $g$ are quasiperiodic functions with respect to the same projection matrix $\mathbf P$. 

It is well-known that $\{ e^{\mathrm{i}\langle\bm{k},\bm{x}\rangle} \}_{ \bm{k}\in\mathbb{Z}^n}$ serves as a complete orthonormal basis for  $L^2_{\rm per}([0,T]^n)$ such that for any $F(\bm{x})\in L^2_{\rm per}([0,T]^n)$, its has the Fourier series
\begin{equation}\label{dfeq1}
F(\bm{x})=\sum_{\bm{k}/2\pi\in\mathbb{Z}^n}F_{\bm{k}}e^{\mathrm{i}\langle\bm{k},\bm{x}\rangle},\quad     F_{\bm{k}}=\dfrac{1}{T^n}\int_{[0,T]^n}F(\bm{x})e^{-\mathrm{i}\langle\bm{k},\bm{x}\rangle}d\bm{x},
\end{equation}
and there holds the Parseval's equality $ \|F \|^2=\sum_{\bm k/2\pi\in \mathbb{Z}^n} |F_{\bm{k}}|^2$. Lemma \ref{lm: decay} relates the decay rate of Fourier coefficients with the regularity of a function (see \cite[p.196]{grafakos2008classical}).
\begin{lemma}\label{lm: decay}
Let $m\in\mathbb Z^+$, suppose $F(\bm{x})\in H^m_{\rm per}(\mathbb [0,T]^n)$, then
\begin{equation}\label{decay}
    |F_{\bm k}|\leq (\sqrt{n})^{{m}}T^{-n}|\bm k|^{-m}|F|_{m,\rm per},\quad |F|_{m,\rm per}^2=\sum_{\bm k/2\pi\in\mathbb Z^d}\|\bm k\|_2^{2m}|F_{\bm k}|^2.
\end{equation}
\end{lemma}

Quasiperiodic function also has the generalized Fourier series and Parseval's equality. One readily verifies $\big\{e^{\mathrm{i} \langle \bm{q},\bm{z} \rangle} \big \}_{\bm{q}\in \mathbb{R}^d}$ forms a normalized orthogonal system as
\begin{equation}
\big(e^{\mathrm{i}  \langle \bm{q}_1,\bm{z} \rangle        },e^{\mathrm{i}\langle \bm{q}_2,\bm{z} \rangle} \big)_{\rm qp}=\delta_{\bm{q}_1,\bm{q}_2},\quad \bm{q}_1,\bm{q}_2\in \mathbb{R}^d,
\end{equation}
where $ \delta_{\bm{q}_1,\bm{q}_2}$ is the Dirac delta function.  Thus, one can define the Fourier transform of the quasiperiodic functions, also called the Fourier-Bohr transformation (see \cite{bohr2018almost}), as 
\begin{equation}
    \mathcal{F}_{\rm qp}\{f\}(\bm{q})=\mathcal{M}\big(f(\bm z)e^{-\mathrm{i}\langle\bm q,\bm z\rangle}\big),
\end{equation}
Correspondingly, one has the generalized Fourier series of the quasiperiodic function and the Parseval's equality in Lemma \ref{thm2}. 
\begin{lemma}\label{thm2}{(see \cite{bohr2018almost})}
    Any $d$-dimensional quasiperiodic function $f(\bm{z})$ has generalized Fourier series
    \begin{equation}\label{gfs}
        f(\bm{z})=\sum_{\bm q\in\mathbb Z[{\rm col}(\mathbf{P})]}f_{\bm q}e^{\mathrm{i}  \langle \bm{q},\bm{z} \rangle},\quad \mathbb Z[{\rm col}(\mathbf{P})]:=\{\bm q \, | \,\bm q=\mathbf{P}\bm k,\;\; \bm k\in  \mathbb{Z}^n     \},
    \end{equation}
    where $f_{\bm q}=(u,e^{\mathrm{i}  \langle \bm{q},\bm{z} \rangle})_{\rm qp}$ is called the $\bm q$th generalized Fourier coefficient of $f(\bm z)$.
     In addition, there holds the Parseval's equality:
      \begin{equation}
        \|f\|^2_{\rm qp}=\sum_{\bm q\in\mathbb Z[{\rm col}(\mathbf{P})]}|f_{\bm q}|^2.
    \end{equation}
\end{lemma}

Correspondingly, one defines the Sobolev spaces $H^m_{\rm qp}(\mathbb R^d),m\in\mathbb Z^+$ for quasiperiodic functions
\begin{equation}
    H^m_{\rm qp}(L^2_{\rm per}(\mathbb R^d))=\{f(\bm z)\in \mathbb R^d,\|f(\bm z)\|_{m,\rm qp}<\infty\},
\end{equation}
and its associated norm and semi-norm, 
\begin{equation}
    \|f(\bm z)\|_{m,\rm qp}^2=\sum_{\bm q\in\mathbb Z[{\rm col}(\mathbf{P})]}(1+\|\bm q\|_2^{2m})|f_{\bm q}|^2, \quad |f(\bm z)|_{m,\rm qp}^2=\sum_{\bm q\in\mathbb Z[{\rm col}(\mathbf{P})]}\|\bm q\|_2^{2m}|f_{\bm q}|^2.
\end{equation}

Theorem \ref{cosis} relates the generalized coefficients of quasiperiodic functions to the Fourier coefficients of its parent function (\cite{jiang2022numerical}). 
\begin{theorem}\label{cosis}
    Let $f(\bm x)$ be a $d$-dimensional quasiperiodic function. There exists a parent function $F(\bm x)$, which has a one-to-one correspondence between their Fourier coefficients, i.e.
    \begin{equation}\label{paf}
   f(z)=\sum_{\bm q\in\mathbb Z[{\rm col}(\mathbf{P})]}f_{\bm q}e^{\mathrm{i}  \langle \bm{q},\bm{z} \rangle},\qquad     F(\bm x)=\sum_{\bm k \in\mathbb Z^n}F_{\bm k}e^{{\rm i}\langle\bm k,\bm x\rangle}.
    \end{equation}
    One has
    \begin{equation}
    \quad F_{\bm k}=f_{\bm q}, \;\; {\rm iff} \;\; \bm q=\mathbf{P}\bm k.
    \end{equation}
\end{theorem}

By Theorem \ref{cosis},  one can denote the injective mapping $\psi: \mathbb R^d\rightarrow  \mathbb R^n/[0,2\pi]^n$ such that  $\psi(\bm z)=\mathbf{P}^\intercal \bm z=\bm x$, and one can transform the quasiperiodic function into its periodic parent function. With a slight abuse of notation, we also regard $\psi$ as an operator which maps the differential operator $D$ in $\mathbb{R}^d$ to the corresponding differential operator in $\mathbb{R}^n$, using the rules of partial differentiation. On the other hand, through the bijective mapping $\phi:\mathbb Z^{n}\rightarrow\mathbb Z[{\rm col}(\mathbf{P})]$ s.t. $\phi(\bm k)=\mathbf{P}\bm k=\bm q$, one can obtain a one-to-one correspondence between the generalized Fourier coefficient of the quasiperiodic function and Fourier series of the periodic function. 

\section{Reduced projection method}\label{s3}
In this section, we shall propose  an efficient RPM for solving the problem of photonic moir\'e lattices, which is described by the following quasiperiodic Schr\"{o}dinger eigenvalue problem \cite{meng2023atomic}:
\begin{equation}\label{eq1}
\mathcal{L}[u]:=-\dfrac{1}{2}\Delta u(\bm z)+v(\bm z)u(\bm z)=    Eu(\bm z),\quad \bm z\in \mathbb{R}^d.
\end{equation}
Here, $\bm z=(z_1,\cdots,z_d)^{\intercal}$ is the physical coordinates in $d$ dimensions, $v(\bm z)$ is a quasiperiodic potential function, and $u(\bm z)$ and  $E$ are respectively the eigenfunction and eigenvalue of the linear Schr\"odinger operator $\mathcal{L}$.  The PM proposed by Jiang and Zhang in \cite{jiang2014numerical} serves as a viable way to solve quasiperiodic eigenvalue problems. The PM transforms  the $d$-dimensional quasiperiodic problem \eqref{eq1} into its corresponding periodic one in $n$-dimensional space through the variable substitution $\psi(\bm z)=\mathbf{P}^\intercal \bm z=\bm x$. To be specific, it suffices to solve the periodic problem
\begin{equation}\label{eq31}
-\frac{1}{2}\varphi(\Delta)U(\bm x)+V(\bm x) U(\bm x)=E U(\bm x),
\end{equation}
where $\varphi(\Delta)$ is given by
\begin{equation}
\varphi(\Delta)=\sum_{i=1}^d\sum_{j,l=1}^{n} P_{ij}P_{il} \dfrac{\partial^2 }{\partial x_j\partial x_l}.
\end{equation}
It is worthwhile to point out that due to the fact that  operator $\varphi(\Delta)$ in Eq. \eqref{eq1} lacks ellipticity, the quasiperiodic Schr\"odinger operator has only continuous spectrum rather than discrete eigenvalues, as pointed out in \cite{simon1982almost}. Thus, when the resolution of the PM increases, the distribution of numerical eigenvalues gradually converges towards the density of states \cite{wang2022convergence}.  Consequently, a numerical eigenvalue of the PM can be viewed as approximation of a specific point within the spectrum.

The PM is a powerful and accurate numerical method for solving quasiperiodic Schr\"{o}dinger eigenvalue problems, however, it suffers from the ``curse of dimensionality''. As the dimension is raised, the DOF required may become extremely large, making it computationally prohibitive and memory-intensive to solve the quasiperiodic eigenvalue problem. For instance, a three-dimensional quasiperiodic
problems with projection matrix of size $3\times 6$, the DOF to deal with is $O(N^6)$, where $N$ is the number of Fourier grid points in one direction. This poses significant challenges in solving high-dimensional eigenvalue problems.

\subsection{Decay rate of the generalized Fourier coefficients} To reduce the computational cost caused by dimension lifting, we propose a highly efficient RPM to tackle this issue. Before introducing the algorithm, we first focus on a decay property of Fourier coefficients which is presented in Theorem \ref{thm6}, as it can guide us in further improving the PM.

\begin{theorem}\label{thm6}
 Let $u(\bm z)$ be the eigenfunction of the $d$-dimensional quasiperiodic Schr\"{o}dinger eigenvalue problem \eqref{eq1} corresponding to $E$, with $u_{\bm q}$ being its $\bm q$th generalized Fourier coefficient, and $v(\bm z)$ and $V(\bm x)$ being the quasiperiodic potential function and its parent function, respectively. Assume $V(\bm x)\in H_{\rm per}^m([0,2\pi]^n)(m\in\mathbb{Z}, \,m\geq 0)$, then for any $\alpha\leq {\rm max}\big\{ m-n+2, 2\big\}$ and $\|\bm q\|_2>4E$, there exists a constant $C_{\alpha}$ such that
\begin{equation}
    |u_{\bm{q}}|\leq C_{\alpha}\|\bm q\|_2^{-\alpha},
\end{equation}
where $C_{\alpha}$ depends on $m,n,\|\mathbf{P} \|,|V|_{{\rm per},m}$ and $\|v\|$. 
\end{theorem}
\begin{proof}
    Without loss of generality, let us set $\|u\|_{\rm qp}=1$. Define
    \begin{equation}
        g(\bm{z})=v(\bm z)u(\bm z)=\sum_{\bm{q}\in\mathbb Z[{\rm col}(\mathbf{P})]}g_{\bm{q}}e^{\mathrm{i}\langle \bm{q},\bm{z}\rangle},\quad {\rm with}\quad g_{\bm q}=\sum_{\bm p\in\mathbb Z[{\rm col}(\mathbf{P})]}v_{\bm q-\bm p}u_{\bm p}.
    \end{equation}
    The weak form of Eq. \eqref{eq1} is to find $u\in H_{\rm qp}^1(\mathbb R^d)$, such that
    \begin{equation}\label{eq: weakformqp}
        \dfrac{1}{2}(\nabla u,\nabla w)_{\rm qp}+(g,w)_{\rm qp}=E(u,w)_{\rm qp},\quad\quad \forall w\in H_{\rm qp}^1(\mathbb R^d).
    \end{equation}
    For $u$, one has its generalized Fourier series,
    \begin{equation}
        u(\bm z)=\sum_{\bm q\in \mathbb Z[{\rm col}(\mathbf{P})]}u_{\bm{q}}e^{\mathrm{i}\langle \bm{q},\bm{z}\rangle}.
    \end{equation}
    Then by the orthogonality of  $\{  e^{\mathrm{i}\langle \bm{q},\bm{z}\rangle}\}$, one obtains from Eq. \eqref{eq: weakformqp} that 
    \begin{equation}\label{eq231}
         Eu_{\bm{q}}=\dfrac{1}{2}|\bm{q}|^2u_{\bm{q}}+g_{\bm{q}},\quad \bm q\in \mathbb Z[{\rm col}(\mathbf{P})].
    \end{equation}
Then by the Parseval's identity and the fact that $\|u\|=1$, one has
    \begin{equation}\label{eq36}
        |g_{\bm q}|=\left|\sum_{\bm p\in\mathbb Z[{\rm col}(\mathbf{P})]}v_{\bm q-\bm p}u_{\bm p}\right|\leq \dfrac{1}{2}\sum_{\bm p\in\mathbb Z[{\rm col}(\mathbf{P})]}(v_{\bm q-\bm p}^2+u_{\bm p}^2)=(\|v\|^2+1)/2.
    \end{equation}
    Because $|\bm q|^2>4E$, it is direct to verify that
    \begin{equation}
        |E-|\bm{q}|^2/2|>|\bm{q}|^2/4.
    \end{equation}
    Therefore, by Eqs. \eqref{eq231} and \eqref{eq36}
    \begin{equation}\label{eq-2}
        |u_{\bm{q}}|=\dfrac{|g_{\bm{q}}|}{|E-|\bm{q}|^2/2|}\leq 2(\|v\|^2+1)|\bm q|^{-2}.
    \end{equation}
    By Eq. \eqref{eq231} again, one has
    \begin{equation}\label{coredecom}
        \left|E-\dfrac{1}{2}|\bm q|^2\right||u_{\bm q}|\leq\sum_{\bm p\in\Gamma_1}|v_{\bm q-\bm p}||u_{\bm p}|+\sum_{\bm p\in\Gamma_2}|v_{\bm q-\bm p}||u_{\bm p}|,
    \end{equation}
    where $\Gamma_1=\{\bm p|\bm p\in\mathbb Z[{\rm col}(\mathbf{P})],|\phi^{-1}(\bm q-\bm p)|>\|\mathbf{P}\|^{-1}|\bm q|/2\}$ and $\Gamma_2=\mathbb Z[{\rm col}(\mathbf{P})]\backslash\Gamma_1$. We then estimate the summations in two parts, separately.
    
    $\mathbf {Estimate~of~the \ \Gamma_1\ part}$. By the Parseval's identity and $\|u\|=1$, one can obtain that for any $\bm{q}$, $|u_{\bm{q}}|\leq1$. By the fact that $V(\bm x)\in H_{\rm per}^m([0,2\pi]^n)$ and Eq. \eqref{decay}, one obtains that
    \begin{equation}
        \left|V_{\phi^{-1}(\bm q-\bm p)}\right|\leq \dfrac{n^{m/2}}{(2\pi)^n}\left|\phi^{-1}(\bm q-\bm p)\right|^{-m}|V|_{{\rm per},m}.
    \end{equation}
    Then the bound
     \begin{equation}
        \begin{split}
            \sum_{\bm p\in\Gamma_1}|v_{\bm q-\bm p}||u_{\bm p}|&\leq\sum_{\bm p\in\Gamma_1}|V_{\phi^{-1}(\bm q-\bm p)}|\leq \dfrac{n^{m/2}}{(2\pi)^n}|V|_{{\rm per},m} \sum_{\bm p\in\Gamma_1}|\phi^{-1}(\bm q-\bm p)|^{-m}\\
            &=\dfrac{n^{m/2}}{(2\pi)^n}|V|_{{\rm per},m}\|\mathbf{P}\|^{-1}\sum_{|\bm k|>|\bm q|/2}|\bm k|^{-m}
        \end{split}
     \end{equation}
     holds, where $\bm k\in\mathbb Z^n$. Then by the fact that
     \begin{equation}
        \sum_{|\bm k|>|\bm q|/2}|\bm k|^{-m}\leq\int_{|\bm t|>|\bm q|/2}|\bm t|^{-m}d\bm t=2\pi\cdot2^{n-2}\int_{|\bm q|/2}^{+\infty}|\bm t|^{n-m-1}d|\bm t|=\dfrac{2^{m-1}\pi}{m-n}|\bm q|^{n-m},
    \end{equation}
    one has
    \begin{equation}\label{part1}
        \sum_{\bm p\in\Gamma_1}|v_{\bm q-\bm p}||u_{\bm p}|\leq C_1|\bm q|^{n-m},
    \end{equation}
    where $C_1$ is a positive constant depending on $|V|_{{\rm per},m},\|\mathbf{P}\|^{-1},m$ and $n$.
    
    $\mathbf {Estimate~of~the \ \Gamma_2\ part}$. By the H\"older's inequality with $1/\beta+1/n=1$, 
    \begin{equation}
    \begin{split}
        \sum_{\bm p\in\Gamma_2}|v_{\bm q-\bm p}||u_{\bm p}|&\leq\left(\sum_{\bm p\in\Gamma_2}|v_{\bm q-\bm p}|^2\right)^{\frac{1}{\beta}}\left(\sum_{\bm p\in\Gamma_2}|v_{\bm q-\bm p}|^{\frac{\beta n-2n}{\beta}}|u_{\bm p}|^{n}\right)^{\frac{1}{n}}\\
        &\leq \max_{\bm p\in \Gamma_2}|v_{\bm q-\bm p}|\cdot\|v\|^{\frac{2}{\beta}}\left(\sum_{\bm p\in\Gamma_2}|u_{\bm p}|^{n}\right)^{\frac{1}{n}}
        \leq \|v\|^{\frac{2}{\beta}+\frac{1}{2}}\left(\sum_{\bm p\in\Gamma_2}|u_{\bm p}|^{n}\right)^{\frac{1}{n}}.
    \end{split}
    \end{equation}
    For any $\bm p\in\Gamma_2$, one has 
    \begin{equation}
        \|\mathbf{P}\|^{-1}|\bm q|/2\geq|\phi^{-1}(\bm q-\bm p)|\geq \|\mathbf{P}\|^{-1}|\bm q-\bm p|,
    \end{equation}
    hence $|\bm p|>|\bm q|/2$. Because of Eq. \eqref{eq-2}, one can assume that the decay rate of $|u_{\bm q}|$ with $|\bm q|$ is $\alpha$, that is, there exists a positive constant $C_{\alpha}$ independent with $\bm q$ such that $|u_{\bm{q}}|\leq C_{\alpha}|\bm q|^{-\alpha}$. Then there exists a positive constant $C_2$ depending on $n$ and such that
    \begin{equation}
        \sum_{\bm p\in\Gamma_2}|u_{\bm p}|^{n}\leq C_{\alpha}\sum_{\bm p\in\Gamma_2}|\bm p|^{-\alpha n}\leq C_{\alpha}|\Gamma_2||\bm q/2|^{-\alpha n}\leq C_2 |\bm q|^{n-\alpha n}.
    \end{equation}
    Therefore, there exists a positive constant $C_3$ depending on $n$ and $\|v\|$ such that
    \begin{equation}\label{part2}
        \sum_{\bm p\in\Gamma_2}|v_{\bm q-\bm p}||u_{\bm p}|\leq C_3|\bm q|^{-\alpha+1}.
    \end{equation}
    Combining Eqs. \eqref{coredecom}, \eqref{part1} and \eqref{part2}, one obtains that
    \begin{equation}
        |u_{\bm q}|\leq 4|\bm q|^{-2}\left(\sum_{\bm p\in\Gamma_1}|v_{\bm q-\bm p}||u_{\bm p}|+\sum_{\bm p\in\Gamma_2}|v_{\bm q-\bm p}||u_{\bm p}|\right)\leq 4C_1|\bm q|^{n-m-2}+4C_3|\bm q|^{-\alpha-1}.
    \end{equation} 
    Because $|u_{\bm{q}}|\leq C_{\alpha}|\bm q|^{-\alpha}$ holds, one obtains that $-\alpha\geq n-m-2$, which ends the proof.
\end{proof}

We employ a 1D quasiperiodic problem of \eqref{eq1} as an example to validate the theoretical result of Theorem \ref{thm6}. Let $v(z)=E_0/\big(1+(\cos(z)+\cos(\sqrt{5}z))^2 \big)$ and the projection matrix $\mathbf{P}=[\sqrt{5}\ \ 1]$.  The PM is employed to solve this problem and depict the generalized Fourier coefficients of eigenfunctions in the raised frequency domain. As shown in Figure \ref{fig3-1} (a), whether it is the eigenfunction corresponding to spectrum $0.5945$ (the red line, the smallest spectrum) or the eigenfunction corresponding to spectrum $0.6297$ (the blue line), their generalized Fourier coefficients both decay exponentially. We adopt the RPM to solve the same problem. In order to quantify the truncation error between the PM and RPM, we define 
\begin{equation}\label{epsd}
    \hbox{Err}(D)=\sum_{|\bm k|>D,\bm k\in \Omega}|U_{\bm k}|^2.
\end{equation}
Figure \ref{fig3-1} (b) depict the exponential decay of $\hbox{Err}(D)$ with respect to $D$. For $D\approx 30$, the method has the machine precision and the reduction error becomes negligible, which accounts for more than $80\%$ reduction of the DOF in the eigenvalue solver. 

\begin{figure}[t!]
	\centering
	\includegraphics[width=0.725\textwidth]{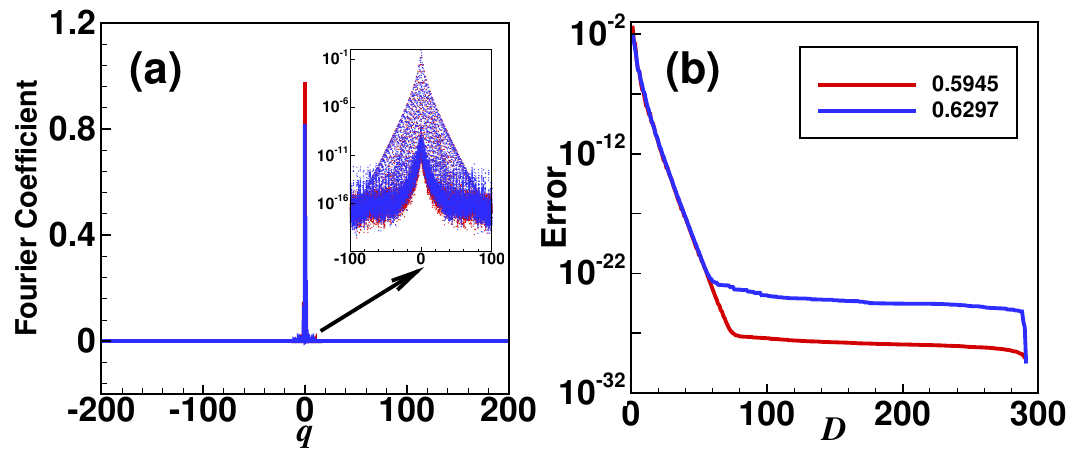}
	\caption{The generalized Fourier coefficient modulus of eigenfunctions for the 1D quasiperiodic potential. (a) The generalized Fourier coefficient modulus of eigenfunctions as function of $\bm q$. (b) The  error $\hbox{Err}(D)$ as function of $D$ for spectrum 0.5945 and 0.6297. In both panels, $E_0=1$ and $N=180$.}\label{fig3-1}
\end{figure}

\subsection{Numerical scheme}
For the PM method, the Fourier approximation space of the numerical solution is given by $\mathbb D={\rm span}\{e^{{\rm i}\langle\bm k,\bm x\rangle},\bm k\in\Omega\}$ with the index set $\Omega$ given by
\begin{equation}\label{omega}
    \Omega=\big\{\bm{k}\in\mathbb{Z}^{n} \big| \|\bm{k}\|_\infty\leq N\big\}.
\end{equation}
As can be observed in Theorem \ref{thm6}, given some mild restrictions on
the regularity of quasiperiodic potential function $v(\bm z)$ and its parent function $V(\bm x)$, the index set $\Omega$ of Fourier expansion can be reduced to
\begin{equation}\label{omega2}
    \Omega_R=\big\{\bm{k}\in\mathbb{Z}^{n} \big| \|\mathbf{P}\bm{k}\|_\infty\leq D, \|\bm{k}\|_\infty\leq N \big\}
\end{equation}
without sacrificing the accuracy of the approximation. Hence, $\Omega$ depends on $N$ and $\Omega_R$ depends on both $N$ and $D$. Here, parameter $D<N$ is a prescribed truncation constant. The RPM for the quasiperiodic problem \eqref{eq1} reads: find non-trivial $U_D^N\in\mathbb D_R={\rm span}\{e^{{\rm i}\langle\bm k,\bm x\rangle},\bm k\in\Omega_R\}$ and $E\in \mathbb R$ such that
\begin{equation}\label{weakform2}
    \dfrac{1}{2}(\varphi(\nabla)U_D^N, \varphi(\nabla)W)+(G_D^N,W)=E(U_D^N,W), \quad W\in\mathbb D_R,
\end{equation}
where $G_D^N=VU_D^N$ and
{\begin{equation}
    \varphi(\nabla)=\left(\sum_{i=1}^dP_{i1}\dfrac{\partial}{\partial x_1},\cdots,\sum_{i=1}^dP_{in}\dfrac{\partial}{\partial x_n}\right).
\end{equation}}

In what follows, we first give a rigorous estimate of the truncation error of the reduced space $\mathbb D_R$ for quasiperiodic functions. For a quasiperiodic function $f$, the operators $\mathcal{P}$ and $\mathcal{Q}$ denote the partial sums 
\begin{equation}
\mathcal{P}f=\sum_{\bm{q}\in\phi(\Omega)}f_{\bm{q}}e^{\mathrm{i}\langle\bm{q},\bm{z}\rangle}=\sum_{\bm{k}\in\Omega}F_{\bm{k}}e^{\mathrm{i}\langle\bm{k},\bm{x}\rangle},
\end{equation}
and
\begin{equation}
\mathcal{Q}f=\sum_{\bm{q}\in\phi(\Omega_R)}f_{\bm{q}}e^{\mathrm{i}\langle\bm{q},\bm{z}\rangle}=\sum_{\bm{k}\in\Omega_R}F_{\bm{k}}e^{\mathrm{i}\langle\bm{k},\bm{x}\rangle},
\end{equation}
where $\Omega$ and $\Omega_R$ are defined in Eqs. \eqref{omega} and \eqref{omega2}. The operator $\mathcal{P}$ depends on $N$ and $\mathcal{Q}$ depends on both $N$ and $D$. In order to obtain the bound of the truncation error, one needs the following lemma (see \cite{shen2011spectral}).

\begin{lemma}\label{lemma2}
    For any quasiperiodic function $f$ with its parent function\\ $F\in H^m_{\rm per}([0,2\pi]^n]),m\in\mathbb{Z}^+$, and $0\leq\mu\leq m$,  the following estimate for $\mathcal{P}f$ holds
    \begin{equation}
        \|\mathcal{P}f-F\|_{\mu,\rm per}\leq N^{\mu-m}|F|_{m,\rm per}.
    \end{equation}
    In addition, if $F\in H^\nu_{\rm per}([0,2\pi]^n])$ with $\nu>n/2$, there exists a constant $C$ depending on $\nu$ such that
    \begin{equation}
        \|\mathcal{P}f-F\|_{\infty,\rm per}\leq CN^{n/2-\nu}|F|_{\nu,\rm per}.
    \end{equation}
\end{lemma}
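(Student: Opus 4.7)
The plan is to prove both bounds directly from the orthogonal-projection structure of $\mathcal{P}$ on Fourier series, starting from the error expansion
\begin{equation*}
\mathcal{P}F - F = -\sum_{\bm k \notin \Omega} F_{\bm k} e^{\mathrm{i}\langle \bm k,\bm x\rangle},
\qquad \Omega = \{\bm k \in \mathbb{Z}^n : \|\bm k\|_\infty \leq N\}.
\end{equation*}
Orthonormality of the Fourier basis then supplies the weighted Parseval identity $\|\mathcal{P}F - F\|_\mu^2 = \sum_{\bm k \notin \Omega}(1 + |\bm k|^{2\mu})|F_{\bm k}|^2$, which serves as the common starting point for both estimates.

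For the $H^\mu$ estimate with $0 \leq \mu \leq s$, I would exploit that every $\bm k \notin \Omega$ obeys $|\bm k| \geq \|\bm k\|_\infty \geq N+1 \geq 1$. The elementary bound $1 + |\bm k|^{2\mu} \leq 2|\bm k|^{2\mu}$ together with $|\bm k|^{2(\mu - s)} \leq N^{2(\mu - s)}$ (valid since $\mu - s \leq 0$ and $|\bm k| > N$) gives
\begin{equation*}
(1 + |\bm k|^{2\mu})|F_{\bm k}|^2 \leq 2\, N^{2(\mu - s)}\, |\bm k|^{2s}|F_{\bm k}|^2.
\end{equation*}
Summing over $\bm k \notin \Omega$ and extracting the square root yields $\|\mathcal P F - F\|_\mu \leq \sqrt{2}\, N^{\mu - s}|F|_s$, with the harmless $\sqrt{2}$ absorbed into the stated inequality.

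For the $L^\infty$ estimate, I would bound $\|\mathcal P F - F\|_\infty$ by $\sum_{\bm k \notin \Omega}|F_{\bm k}|$, split each term as $(|\bm k|^\nu|F_{\bm k}|)\cdot |\bm k|^{-\nu}$, and apply the Cauchy--Schwarz inequality to get
\begin{equation*}
\|\mathcal P F - F\|_\infty \leq |F|_\nu\, \Bigl(\sum_{\bm k \notin \Omega}|\bm k|^{-2\nu}\Bigr)^{1/2}.
\end{equation*}
The remaining tail $\sum_{\|\bm k\|_\infty > N}|\bm k|^{-2\nu}$ is then controlled by comparison with the polar-coordinate integral $\int_N^\infty r^{n - 1 - 2\nu}\, dr$, producing a bound of the form $C\, N^{n - 2\nu}$ whenever $2\nu > n$.

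The main obstacle is reconciling the tail-sum convergence with the stated threshold $\nu > 1/2$: the integral comparison above requires $\nu > n/2$, so the exponent $N^{1/2 - \nu}$ in the lemma is precisely the one-dimensional statement. The general multi-dimensional version then follows via a tensor-product argument, exploiting that $\mathcal{P}$ factors coordinate-wise over the cube $\Omega$, reducing the analysis to $n$ successive applications of the one-dimensional estimate and collecting the resulting combinatorial factor into the constant $C'$, which is the treatment carried out in \cite{shen2011spectral}.
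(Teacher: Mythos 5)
The paper does not prove this lemma; it is quoted directly from \cite{shen2011spectral}, so your argument can only be measured against the standard textbook proof. For the first estimate your argument is exactly that proof: orthogonality gives $\|\mathcal{P}F-F\|_\mu^2=\sum_{\bm k\notin\Omega}(1+|\bm k|^{2\mu})|F_{\bm k}|^2$, and the weight comparison $1+|\bm k|^{2\mu}\leq 2|\bm k|^{2(\mu-s)}|\bm k|^{2s}\leq 2N^{2(\mu-s)}|\bm k|^{2s}$ for $\|\bm k\|_\infty>N$ finishes it. The constant $\sqrt2$ you flag is genuinely unavoidable with the norm as defined (take $\mu=0$), so the inequality really should carry a generic constant as in the cited reference; this is a defect of the transcription in the paper, not of your argument.

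The $L^\infty$ part has a genuine gap. Your Cauchy--Schwarz step is correct and honestly yields $\bigl(\sum_{\|\bm k\|_\infty>N}|\bm k|^{-2\nu}\bigr)^{1/2}|F|_\nu\lesssim N^{n/2-\nu}|F|_\nu$ under the hypothesis $\nu>n/2$, and you are right that the exponent $1/2-\nu$ and threshold $\nu>1/2$ in the statement are the one-dimensional case. But the tensor-product repair does not close this. Writing $I-\prod_j\mathcal{P}^{(j)}=\sum_j(I-\mathcal{P}^{(j)})\prod_{l<j}\mathcal{P}^{(l)}$ and applying the one-dimensional estimate in the $j$th variable leaves you with $\sup_{x'}$ of the partial seminorm $\bigl(\sum_{k_j}|k_j|^{2\nu}|\hat F(k_j,x')|^2\bigr)^{1/2}$ over the remaining variables $x'$; controlling that supremum costs another $\nu>1/2$ derivatives in \emph{each} remaining coordinate, i.e.\ a dominating-mixed-smoothness norm with weight $\prod_j(1+|k_j|^{2\nu})$, which the isotropic seminorm $|F|_\nu$ does not dominate. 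Indeed, for $n\geq2$ and $1/2<\nu\leq n/2$ the space $H^\nu$ does not embed into $L^\infty$, so $\|\mathcal{P}F-F\|_\infty$ can be infinite and no inequality of the stated form can hold; the correct multidimensional conclusion of your computation is $\|\mathcal{P}F-F\|_\infty\leq C N^{n/2-\nu}|F|_\nu$ for $\nu>n/2$. You should either prove that (and note that the lemma as stated is the $n=1$ instance), or assume mixed regularity if the $1/2-\nu$ rate is to be retained.
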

The truncation error under different norms of operator $\mathcal{Q}$ is bounded, as shown in Theorem \ref{corecorethm1}.

\begin{theorem}\label{corecorethm1}
Suppose that $u$ is a quasiperiodic function. Let $U$ be the parent function of $u$. If $U\in H^m_{\rm per}([0,2\pi]^n),u\in H^{m'}_{\rm qp}(\mathbb R^d)$ with $m,m'\in\mathbb Z^+$ and $0\leq\mu< m\leq m'$, there exist constants $C_1$ and $C_2$ depending on $\|\mathbf{P}\|$ and $\mu$ such that
\begin{equation}
    \|\mathcal{Q}u-u\|_{\mu,\rm qp}\leq C_1N^{\mu-m}|U|_{m,\rm per}+C_2D^{\mu-m'}|u|_{m',\rm qp}.
\end{equation}
If $U\in H^{\nu}_{\rm per}([0,2\pi]^n),u\in H^{\eta}_{\rm qp}(\mathbb R^d)$ with $n/2<\nu\leq\eta$, there exist constants $C_3$ and $C_4$ depending on $\lambda,d,\nu$ and $\eta$ such that
\begin{equation}
    \|\mathcal{Q}u-u\|_{\infty,\rm qp}\leq C_3N^{n/2-\nu}|U|_{\nu,\rm per}+C_4D^{n/2-\eta}|u|_{\eta,\rm qp}.
\end{equation}
\end{theorem}

\begin{proof}
By Lemma \ref{lemma2}, if $U\in H^m_{\rm per}([0,2\pi]^n)$ with $m\in\mathbb{Z}^+$ and $0\leq\mu\leq m$, one has
    \begin{equation}\label{lemma3}
        \|\mathcal{P}u-U\|_{\mu,{\rm per}}\leq N^{\mu-m}|U|_{m,\rm per}.
    \end{equation}
    In addition, if $U\in H_{\rm per}^\nu([0,2\pi]^n)$ with $\nu>n/2$, there exists a constant $C$ depending on $\nu$ satisfying
    \begin{equation}\label{lemma6}
        \|\mathcal{P}u-U\|_{\infty,\rm per}\leq CN^{n/2-\nu}|U|_{\nu,\rm per}.
    \end{equation}
    By a direct decomposition $\mathcal{Q}u-u=(\mathcal{Q}u-\mathcal{P}u)+(\mathcal{P}u-U)$ and the triangle inequality, one has 
    \begin{equation}\label{andy1}
        \|\mathcal{Q}u-u\|_{\mu,\rm qp}\leq\|\mathcal{Q}u-\mathcal{P}u\|_{\mu,\rm qp}+\|\mathcal{P}u-u\|_{\mu,\rm qp},
    \end{equation}
    \begin{equation}\label{andy2}
        \|\mathcal{Q}u-u\|_{\infty,\rm qp}\leq\|\mathcal{Q}u-\mathcal{P}u\|_{\infty,\rm qp}+\|\mathcal{P}u-u\|_{\infty,\rm qp}.
    \end{equation}
    Firstly consider the $\mu$-norm case. For any $\bm{k}\in\Omega/\Omega_R$, $\|\mathbf{P}\bm{k}\|_\infty>D$ holds, so that
    \begin{equation}\label{pfandy1}
    \begin{split}
        \|\mathcal{Q}u-\mathcal{P}u\|_{\mu,\rm qp}^2&\leq D^{-2m'+2\mu}\sum_{\bm{q}\in\phi(\Omega/\Omega_R)}(1+|\bm{q}|^{2\mu})|u_{\bm{q}}|^2|\bm{q}|^{2m'-2\mu}\\
        &\lesssim D^{-2m'+2\mu}|u|_{m',\rm qp}^2,
    \end{split}
    \end{equation}
   where $A\lesssim B$ denotes that $A$ is less than or similar to $B$. Then by Lemma \ref{lemma2}, if $U\in H^m_{\rm per}([0,2\pi]^n)$ with $m\in\mathbb{Z}^+$ and $0\leq\mu\leq m$, one has
    \begin{equation}
        \|\mathcal{P}u-U\|_{\mu,\rm per}\leq N^{\mu-m}|U|_{m,\rm per}.
    \end{equation}
    Then by the definition of the Sobolev spaces $H_{\rm per}^{\mu}$ and $H_{\rm qp}^{\mu}$, one has
    \begin{equation}\label{pfandy3}
        \|\mathcal{P}u-u\|_{\mu,\rm qp}\leq \|\mathbf{P}\|^{\mu}\|\mathcal{P}u-U\|_{\mu,\rm per} .
    \end{equation}
    Combining Eqs. \eqref{pfandy1} and \eqref{pfandy3}, the $\mu$-norm case is proved.
    
On the other hand, one can use the Cauchy-Schwarz inequality to obtain
    \begin{equation}\label{eq333}
        \begin{split}
            \|\mathcal{Q}u-\mathcal{P}u\|_{\infty,\rm qp}\leq \sum_{\bm{q}\in\phi(\Omega/\Omega_R)}|u_{\bm{q}}|\leq \left(\sum_{\bm{q}\in\phi(\Omega/\Omega_R)}|\bm{q}|^{-2\eta}\right)^{\frac{1}{2}}\left(\sum_{\bm{q}\in\phi(\Omega/\Omega_R)}|\bm{q}|^{2\eta}|u_{\bm{q}}|^2\right)^{\frac{1}{2}}.
        \end{split}    
    \end{equation}
    For $\eta>n/2$,
    {\begin{equation}\label{eq334}
    \begin{split}
        \sum_{\bm{q}\in\phi(\Omega/\Omega_R)}|\bm{q}|^{-2\eta}&=\sum_{\bm{k}\in\Omega/\Omega_R}|\mathbf{P}\bm{k}|^{-2\eta}\lesssim N^{n-d}\int_{|\bm z|\geq D} |\bm z|^{-2\eta}d\bm z\\
        &=\dfrac{2^{2\eta-1}\pi}{2\eta-d}\lambda^{n-d}D^{n-2\eta},
    \end{split}
    \end{equation}
    where $C_0$ is a positive constant depending on $\eta$ and $d$.} Hence by Eqs. \eqref{eq333} and \eqref{eq334}, there exists a positive constant $C_4$ depending on $\lambda,d$ and $\eta$ such that
    \begin{equation}\label{pfandy4}
        \|\mathcal{Q}u-\mathcal{P}u\|_{\infty,\rm qp}\leq C_4D^{n/2-\eta}|u|_{\eta,\rm qp}.
    \end{equation}
    {By Lemma \ref{lemma2}, one has
    \begin{equation}\label{pfandy5}
        \|\mathcal{P}u-u\|_{\infty,\rm qp}\leq C_3N^{n/2-\nu}|U|_{\nu,\rm per},
    \end{equation}
    where $C_3$ is a positive constant depending on $\nu$.} Then combining Eqs. \eqref{pfandy4} and \eqref{pfandy5}, the infinite norm case is proved.
\end{proof}

{
\begin{remark}
    If the parent function $F(\bm x)\in H^m_{\rm per}$, one can use Theorem \ref{cosis} to obtain $f(\bm z)\in H^m_{\rm qp}$. This is because
    \begin{equation}
    \begin{split}
        \|f(\bm z)\|_{m,\rm qp}^2&=\sum_{\bm q\in\mathbb Z[{\rm col}(\mathbf{P})]}(1+\|\bm q\|_2^{2m})|f_{\bm q}|^2=\sum_{\bm k\in\mathbb{Z}^n}(1+\|{\mathbf P}\bm k\|_2^{2m})|F_{\bm k}|^2\\&\leq C\|F(\bm x)\|_{m,\rm per},
    \end{split}
    \end{equation}
    where $C$ is a positive constant depending on $\|\mathbf P\|$. Hence $m\leq m'$ and $\nu\leq \eta$ always hold, and the error decay rate of $D$ is not slower than that of $N$. This shows the feasibility of a secondary truncation along the direction of $\mathbf{P}\bm k$. This also implies that, if $u$ and $U$ have sufficiently good regularity, the truncation error can achieve exponential decay with respect to both $N$ and $D$. 
\end{remark}}

\subsection{Solution Algorithm}
In this part, we propose the detailed scheme of the RPM under the weak form Eq. \eqref{weakform2} with the similar spatial notations in \cite{liao2022adaptive}. For the space domain $[0,2\pi]^n$, consider the uniform length $h=2\pi/N$ in each direction for an even positive integer $N$. Define $T_h=\{x=mh,m=0,1,\cdots,N-1\}^n$. For any $U(\bm x)\in H^s_{\rm per}([0,2\pi]^n)$, define $I_N$ as the trigonometric interpolation operator \cite{shen2011spectral},
\begin{equation}
    (I_NU)(\bm x)=\sum_{\bm k\in\Omega_R}\tilde{U}_{\bm k}e^{{\rm i}\langle\bm k,\bm x\rangle},
\end{equation}
where the pseudo-spectral coefficients $\tilde{U}_{\bm k}$ are determined such that $(I_NU)(\bm x)=U(\bm x_h)$ hold for all $\bm x_h\in T_h$. The Fourier pseudo-spectral first and second order derivatives of $(I_NU)(\bm x)$ along the  $x_1$ direction are given by
\begin{equation}
    \mathcal{D}_1(I_NU)=\sum_{\bm k\in\Omega_R}{\rm i}k_1\tilde{U}_{\bm k}e^{{\rm i}\langle\bm k,\bm x\rangle},\quad \mathcal{D}_1^2(I_NU)=-\sum_{\bm k\in\Omega_R}k_1^2\tilde{U}_{\bm k}e^{{\rm i}\langle\bm k,\bm x\rangle}.
\end{equation}
The differentiation operators of other directions can be defined in a similar form. In turn, we can define the discrete higher-dimensional Laplacian in the point-wise sense by
\begin{equation}
    \varphi(\Delta)(I_NU)=\sum_{\bm k\in\Omega_R}\tilde{U}_{\bm k}{\rm tr}(\mathbf{P}\mathbf{H}_{\bm x}\mathbf{P}^{\intercal})e^{{\rm i}\langle\bm k,\bm x\rangle},
\end{equation}
where the operator matrix $\mathbf{H}_{\bm x}$ has the form
\begin{equation}
    \mathbf{H}_{\bm x}=\begin{bmatrix}
         \frac{\partial^2}{\partial x_1^2} & \frac{\partial^2}{\partial x_1\partial x_2} & \cdots & \frac{\partial^2}{\partial x_1\partial x_n}\\
         \frac{\partial^2}{\partial x_2\partial x_1} & \frac{\partial^2}{\partial x_2^2} & \cdots & \frac{\partial^2}{\partial x_2\partial x_n}\\
         \ldots & \ldots & & \ldots\\
         \frac{\partial^2}{\partial x_n\partial x_1} & \frac{\partial^2}{\partial x_n\partial x_2} & \cdots & \frac{\partial^2}{\partial x_n^2}
    \end{bmatrix}.
\end{equation}
By the properties of the orthonormal basis, it leads to a system of equations for each frequency mode $\tilde{U}_{\bm k},\bm k\in\Omega_R$ by the properties of the orthonormal basis,
\begin{equation}\label{eq3}
    E\tilde{U}_{\bm{k}}=\dfrac{1}{2}{\rm tr}(\mathbf{P}\bm k\bm k^{\intercal}\mathbf{P}^{\intercal}) \tilde{U}_{\bm{k}}+\sum_{\bm m\in\Omega_R}\tilde{V}_{\bm k-\bm m}\tilde{U}_{\bm m}.
\end{equation}

One denotes that $\bm{\hat{U}}$ is an $|\Omega_R|\times1$ column vector with its components being the Fourier coefficients $U_{\bm{k}}$, and the discrete eigenvalue problem Eq. \eqref{eq3} can be rewritten into a matrix eigenvalue problem $\mathbf{H}\bm{\hat{U}}=E\bm{\hat{U}}$. In real computations, due to the large size of the dense matrix $\mathbf{H}$, it is not practical to store its elements and the eigenvalue problem is to be solved in a matrix-free manner. That is, for matrix $\mathbf{H}$, one defines its matrix-vector product function
\begin{equation}\label{mvp}
    \mathbf{H}\bm{f}=\mathbf{\hat{D}}\bm{f}+\hbox{FFT}\big(V(\bm x)\cdot\hbox{IFFT}(\bm f)\big),
\end{equation}
where $\hbox{FFT}(\cdot)$ and $\hbox{IFFT}(\cdot)$ denote the $n$-dimensional fast Fourier transform (FFT) and inverse fast Fourier transform (IFFT). Here $\bm{\hat{D}}$ is a diagonal matrix such that for $ \bm{\hat{U}}_m=\tilde{U}_{\bm k}$, the $m$th diagonal element of $\bm{\hat{D}}$ is
\begin{equation}\label{eq350}
\hat{D}_{mm}=\dfrac{1}{2}{\rm tr}(\mathbf{P}\bm k\bm k^{\intercal}\mathbf{P}^{\intercal}).
\end{equation}
Since only the operation $\mathbf{H} \bm f$ is invoked during the generation of basis vectors in the Krylov subspace, there is no need to store the dense matrix $\mathbf{H}$ itself, thus reducing the storage cost from $O(|\Omega_R|^2)$ to $O(|\Omega_R|)$. Then the eigenvalue problem \eqref{eq3} can be solved via the Krylov subspace iterative method in a matrix-free manner \cite{van2000krylov,watkins2007matrix}. To solve the eigenvalues of $\mathbf{H}$ by the Krylov subspace method, one takes a random starting vector $\bm b\in\mathbb{R}^{|\Omega_R|\times1}$, and generates the Krylov subspace $K_M=\hbox{span}\{\bm b,\mathbf{H}\bm b,\cdots,\mathbf{H}^{M-1}\bm b\}$. The orthonormal basis $\bm Q_M=(\bm q_1,\bm q_2,\cdots,\bm q_M)$ of $K_M$ can be generated by the implicitly restarted Arnoldi method \cite{lehoucq2001implicitly}. One can determine the Hessenberg matrix $\mathbf{H}_M=\bm Q_M^{\intercal}\mathbf{H}\bm{Q}_M=\bm Q_M^{\intercal}(\mathbf{H}\bm q_1,\mathbf{H}\bm q_2,\cdots,\mathbf{H}\bm q_M)$ and solve its eigenpairs $\{(E_m,u_m)\}_{m=1}^M$ of $\mathbf{H}_M$ by the QR algorithm. Detailed procedures of the RPM are summarized in Algorithm \ref{algorithm1}. 

\begin{algorithm}[h]
	\caption{The reduced projection method (RPM)}
	\label{algorithm1}
    \leftline{{\bf Input:}~$d$-dimensional quasiperiodic potential $v$, projection matrix $\mathbf{P}$, and parameter}
    \leftline{$N$, $D$, $M$, $\delta$ and $\epsilon$} 
	\leftline{{\bf Output:}~Eigenpairs $\{(E_m,u_m)\}$}
	\begin{algorithmic}[1]
            \State Determine the index sets $\Omega_R$ of basis space and take random starting vector $\bm b\in\mathbb{R}^{2|\Omega_R|\times 1}$ 
            \State Compute $\mathbf D$ by Eq. \eqref{eq350}
            \State Generate the matrix-vector product $\mathbf H\bm b$ by \eqref{mvp}
            \State Repeat step 4-5 to calculate $\mathbf H^2\bm b,\cdots,\mathbf H^{G-1}\bm b$ and generate the Krylov subspace $K_G$ and its orthonormal basis $\mathbf Q_G$ 
            \State Determine Hessenberg matrix $\mathbf H_G$ and compute its eigenpairs $\{(E_m,U_m)\}_{m=1}^G$ of $\mathbf H_G$
            \State Project the eigenfunctions $U_m$ back into the three-dimensional space by $u_m$
	\end{algorithmic}
\end{algorithm}

By the RPM, the DOF and the complexity of the eigenvalue solver are significantly reduced. Specifically, the DOF of the eigenvalue solver for approximating a $d$ dimensional quasiperiodic system with a $d\times n$ projection matrix using the PM is reduced from $O(N^{n})$ to $O(N^{n-d}D^{d})$. Correspondingly, the computational complexity of the proposed algorithm for solving the first $k$ eigenpairs using the Krylov subspace method decreases from $O(kN^{2n})$ to $O(kN^{2(n-d)}D^{2d})$ \cite{lehoucq1998arpack,wright2001large}. It is remarked that the zero-fill operation for the FFT leads
to $O(N^n \log N)$ complexity. This complexity is usually much smaller than
the $O(N^{2(n-d)}D^{2d})$ scaling, and the FFT calculation is only a small portion in the eigenvalue solver. Due to the fast decay of the generalized Fourier coefficients along $\mathbf{P}\bm{k}$, the RPM can thus obtain reliable numerical results with much fewer DOFs of the eigenvalue solver, thereby mitigating the curse of dimensions, especially for high-dimensional problems. 

In what follows, we first give a rigorous estimate of the truncation error of the reduced space for quasiperiodic functions. Let $\mathcal{I}f$ represent the trigonometric interpolation associated with the RPM. Because of the use of the FFT and IFFT, the error analysis is quite different. The upper bound of the approximation under different norms of operator $\mathcal{Q}$ to the eigenspace of Eq. \eqref{eq1} is given in Theorem \ref{corecorethm}.

{\begin{theorem}\label{corecorethm}
Suppose that $u$ is the solution of the quasiperiodic Schr\"odinger eigenvalue problem Eq. \eqref{eq1}. Let $U$ be the parent function of $u$. If $U\in H^m_{\rm per}([0,2\pi]^n)$,\\$u\in H^{m'}_{\rm qp}(\mathbb R^d)$ with $m,m'\in\mathbb Z^+$ and $0\leq\mu< m\leq m'$, there exist constants $C_1$ and $C_2$ depending on $\|\mathbf{P}\|$ and $\mu$ such that
\begin{equation}
    \|\mathcal{I}u-u\|_{\mu,\rm qp}\leq C_1N^{\mu-m}|U|_{m,\rm per}+C_2D^{\mu-m'}|u|_{m',\rm qp}.
\end{equation}
If $U\in H^{\nu}_{\rm per}([0,2\pi]^n),u\in H^{\eta}_{\rm qp}(\mathbb R^d)$ with $n/2<\nu\leq\eta$, there exist constants $C_3$ and $C_4$ depending on $\lambda,d,\nu,n$ and $\eta$ such that
\begin{equation}
    \|\mathcal{I}u-u\|_{\infty,\rm qp}\leq C_3N^{n/2-\nu}|U|_{\nu,\rm per}+C_4D^{n/2-\eta}|u|_{\eta,\rm qp}.
\end{equation}
\end{theorem}}

\begin{proof}
    {Because of the discrete orthogonality condition
\begin{equation}
    (e^{\mathrm{i}\langle\bm{k}_1,\bm{x}\rangle},e^{\mathrm{i}\langle\bm{k}_2,\bm{x}\rangle})=\begin{cases}
        1, \quad \bm k_1=\bm k_2+2N \bm m,\bm m\in\mathbb Z^n,\\
        \\
        0,\quad {\rm otherwise},
    \end{cases}
\end{equation}
one has
\begin{equation}
    \mathcal{I}f=\mathcal{Q}f+\mathcal{R}f,
\end{equation}
where 
\begin{equation}
    \mathcal{R}f=\sum_{\bm k\in\Omega_R}\left(\sum_{\bm m\in\mathbb Z^n\backslash\{\bm 0\}}F_{\bm k+2N\bm m}\right)e^{\mathrm{i}\langle\bm{k},\bm{x}\rangle}
\end{equation}
is the aliasing error. Then one has the decomposition $\mathcal{I}u-u=(\mathcal{Q}u-\mathcal{P}u)+(\mathcal{P}u-u)+\mathcal{R}u$. By the triangle inequality, one has 
    \begin{equation}\label{loveandy1}
        \|\mathcal{I}u-u\|_{\mu,\rm qp}\leq\|\mathcal{Q}u-\mathcal{P}u\|_{\mu,\rm qp}+\|\mathcal{P}u-u\|_{\mu,\rm qp}+\|\mathcal{R}u\|_{\mu,\rm qp},
    \end{equation}
    \begin{equation}\label{loveandy2}
        \|\mathcal{I}u-u\|_{\infty,\rm qp}\leq\|\mathcal{Q}u-\mathcal{P}u\|_{\infty,\rm qp}+\|\mathcal{P}u-u\|_{\infty,\rm qp}+\|\mathcal{R}u\|_{\infty,\rm qp}.
    \end{equation}
One can obtain that there exist constants $C_1$ and $C_2$ depending on $\mu$, such that  \cite{canuto2007spectral}
    \begin{equation}\label{pfandy2}
        \|\mathcal{R}u\|_{\mu,\rm per}\leq C_1N^{\mu}\|\mathcal{R}u\|_{L^2}\leq C_2N^{\mu-s}|U|_{s,\rm per}.
    \end{equation}
    Then
    \begin{equation}
        \|\mathcal{R}u\|_{\mu,\rm qp}\leq \|\mathbf{P}\|^{\mu}\|\mathcal{R}u\|_{\mu,\rm per}
    \end{equation}
    holds. In addition, applying the Cauchy-Schwarz inequality, there exists a constant $C_3$ depending on $\nu$ and $n$ such that
    \begin{equation}\label{pfandy6}
        \|\mathcal{R}u\|_{\infty,\rm per}\leq\sum_{\bm k\in\Omega_R}\sum_{\bm m\in\mathbb Z^n\backslash\{\bm 0\}}|U_{\bm k+2N\bm m}|\leq \sum_{\bm k\in \Omega^C}|U_{\bm k}|\leq C_3N^{n/2-\nu}|U|_{\nu,\rm per}.
    \end{equation}
   By using Theorem \ref{corecorethm1}, both cases are proved.}
\end{proof}

\section{Numerical examples}\label{s4}

We present numerical results to demonstrate the effectiveness of the RPM. Specifically, we apply the RPM to quasiperiodic Schr\"{o}dinger eigenvalue problems for moir\'e lattices in 1D and 2D spaces and assess the quality of the resulting solutions, as well as the CPU time.  A matrix-free preconditioned Krylov subspace method \cite{liesen2013krylov,kelley1995iterative,saad2003iterative,stewart2002krylov} is employed which requires only the matrix-vector product to be stored at each iteration. In these examples, we compare the RPM with the PM, which shows the accuracy and efficiency of the RPM. The calculations presented in this section are executed using Matlab code on an Intel TM core with a clock rate of 2.50 GHz and 32 GB of memory.

\subsection{1D example}
We first examine the performance of the RPM for the 1D case. {To be specific, we adopt the potential function in Eq. \eqref{eq1}  to be those for 1D moir\'e lattices \cite{gao2023pythagoras}, expressed by}
\begin{equation}
    v_1(z)=\dfrac{E_0}{\big[\cos\big(2\cos(\theta/2)z\big)+\cos\big(2\sin(\theta/2)z\big)\big]^2+1},
\end{equation}
with $\theta=\pi/6$. The projection matrix is $\mathbf{P}=[2\cos(\theta/2),\ \ 2\sin(\theta/2)]$.

We take $E_0=1$. We fix the number of Fourier expansions to be $N=180$ and depict in Figure \ref{fig3-1-2} the DOF of the eigenvalue solver and condition numbers of $\mathbf{H}$ in Eq. \eqref{mvp} against the truncation parameter $D$. It can be observed clearly that both the DOF and condition numbers decrease rapidly with the decrease of $D$. For comparison, the DOF of the original PM is $N^2=32400$, much bigger than that of the RPM for small $D$.  Thus, a small $D$ not only leads to a matrix eigenvalue system of much smaller size, but also reduces the number of iterations to converge. These observations highlight the potential of using the RPM to solve high-dimensional quasiperiodic eigenvalue problems.

\begin{figure}[t!]
	\centering
	\includegraphics[width=0.725\textwidth]{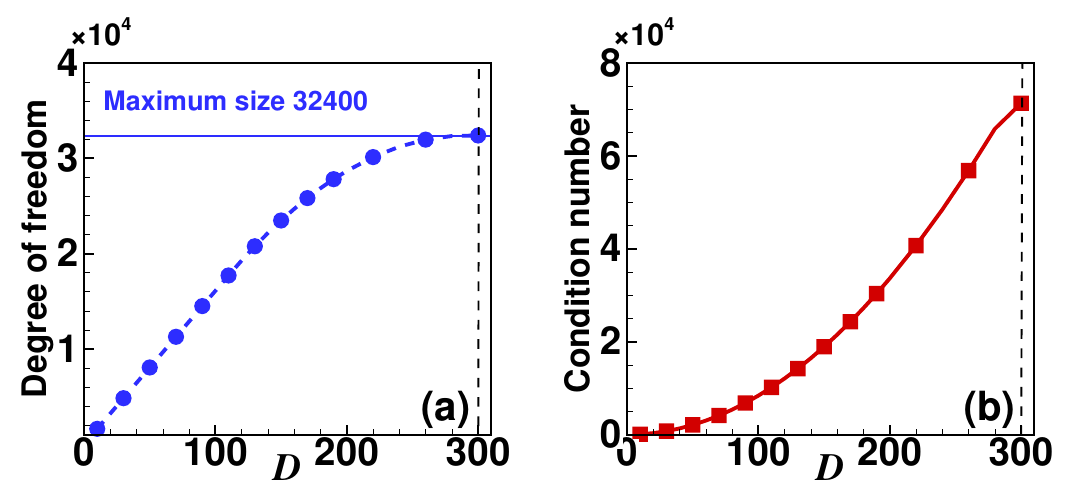}
	\caption{The DOF (a)  and the condition number (b) as function of $D$ using the RPM in one dimension with $N=180$. Correspondingly, the DOF of the PM is $N^2=32400$.}\label{fig3-1-2}
\end{figure}

\begin{figure}[t!]
	\centering
	\includegraphics[width=0.725\textwidth]{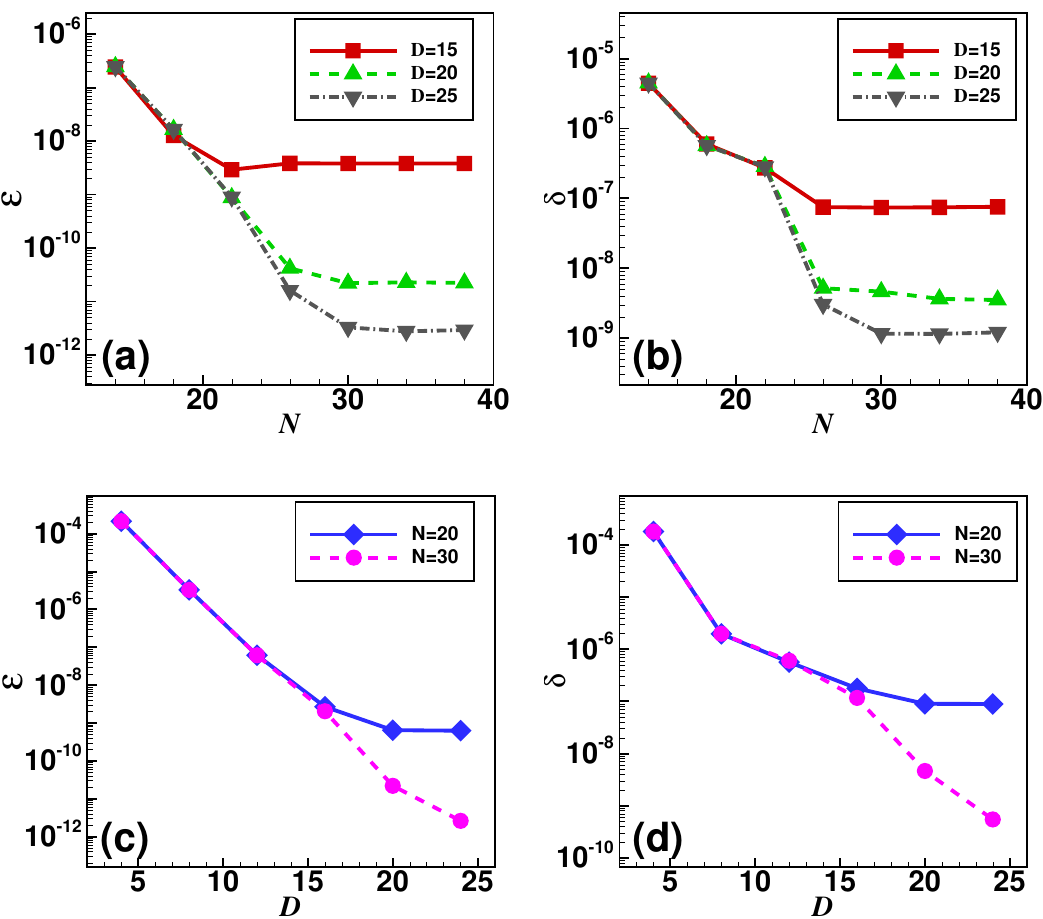}
	\caption{Absolute error of eigenvalues $\varepsilon$ and the $L^2$-error of the first eigenfunction $\delta$. (a,b): Error as function of $N$ for different $D$. (c,d): Error as function of $D$ for different $N$. }\label{fig3-3-1}
\end{figure}

To demonstrate the exceptional accuracy and rapid convergence of the RPM approach, we present error plots in Figure \ref{fig3-3-1} for the potential function with $E_0=1$. The ``exact" eigenvalues and eigenfunctions are determined using the numerical results obtained from the PM when $N=300$. The error of eigenvalue, $\varepsilon$, is measured by the absolute error of the first eigenvalue. The error of the first normalized eigenfunction is also measured by the $L^2$-norm in interval $[0,1]$, which is denoted by $\delta$. Fig. \ref{fig3-3-1} illustrates the convergence with the increase of $N$ for $D=15, 20$ and $25$, and the convergence with the increase of $D$ for $N=20$ and $30$, characterized by both $\varepsilon$ and $\delta$. Panels (a,b) illustrate that both $\varepsilon$ and $\delta$ exhibit an exponential decrease as $N$ increases, eventually attaining a fixed value. It is notable that the magnitude of this fixed value diminishes with the increase of $D$. For relatively small values of $N$ and $D$ (e.g. $N=30$ and $D=20$), $\varepsilon$ is already smaller than $10^{-10}$ and $\delta$ is smaller than $10^{-8}$, demonstrating the high accuracy of the RPM. Panels (c,d) exhibit exponential decays with the increase of $D$, which is in agreement with the error analysis. When $D$ is small ($D\leq15$), the error curves of $N=20$ and $30$ almost overlap, indicating that the error mainly comes from the basis reduction. When $D$ is large, the error curves of the two cases have a significant difference, indicating that the error is mainly caused by the PM part. Overall, one can observe that high accuracy of the results is remained in spite of a significant reduction in the number of bases.  

In Table \ref{tb1}, we display the DOF in the RPM and the CPU time for the 1D system with $E_0=1$ for $N=50, 100$ and $150$
with $D$ increasing from $10$ to $50$. The DOF increases linearly with $D$. Theoretically, the RPM of 1D systems has complexity $O(D^2)$ for given $N$, and $O(N^2)$ for given $D$. Correspondingly, the complexity of the original PM is $O(N^4)$. Moreover, the condition number of the RPM is much smaller than the PM, as shown in Fig. \ref{fig3-1-2}. The results of the CPU time validate the complexity analysis. We have shown that a small $D$ can achieve high accuracy. At $N=50$, setting $D=20$ has error as small as $10^{-10}$. In this case, the CPU time for the RPM is $1.46$ seconds, $11.6$ times faster than that of the original PM. The reduction for large $N$ is more significant. For $N=150$, the speedup with $D=20$ reaches $317.0$ times. Correspondingly, with $N=50,100$ and $150$, the DOF of the original PM  for $D=20$ are $2.4,4.8$ and $7.2$ times greater than those of the RPM, respectively. These results clearly demonstrate the attractive performance of the RPM.

\begin{table}[t!]
	\centering
	\fontsize{7}{7}\selectfont
	\begin{threeparttable}
		\caption{The DOF and CPU time of the RPM for different $N$ and $D$} 
		\label{tb1}
		\begin{tabular}{cccccccc}
			\toprule
			\midrule
            \multirow{3}{*}{} & \multirow{3}{*}{$D$}  & \multicolumn{2}{c}{$N=50$}  & \multicolumn{2}{c}{$N=100$}  & \multicolumn{2}{c}{$N=150$} \cr
                  \cmidrule(r){3-4} \cmidrule(r){5-6} \cmidrule(r){7-8}
                  \noalign{\smallskip} 
                  &  & DOF & CPU time (s) & DOF & CPU time (s) & DOF & CPU time (s)\cr
			\midrule
                PM & - & 2500 & 17.010 & 10000 & 320.604 & 22500 & 3179.439 \cr
                \midrule
                \multirow{9}{*}{\ \ RPM \ \ }& 50 & 2373 & 9.382 & 5177 & 35.500 & 7766 & 123.172 \cr
                & 45 & 2236 & 8.389 & 4659 & 21.791 & 6990 & 85.054 \cr
                & 40 & 2048 & 5.690 & 4141 & 17.595 & 6210 & 61.924 \cr
                & 35 & 1811 & 4.433 & 3623 & 11.190 & 5434 & 42.748 \cr
                & 30 & 1552 & 3.110 & 3106 & 9.733 & 4658 & 30.751 \cr
                & 25 & 1295 & 2.266 & 2587 & 5.404 & 3881 & 18.599 \cr
                & 20 & 1034 & 1.460 & 2069 & 3.529 & 3106 & 10.029 \cr
                & 15 & 777 & 1.000 & 1551 & 2.456 & 2328 & 6.542 \cr
                & 10 & 517 & 0.452 & 1035 & 1.050 & 1552 & 2.800 \cr
			\midrule
			\bottomrule
		\end{tabular}
	\end{threeparttable}
\end{table}

Figure \ref{fig3-3-3} depicts the error of the normalized first eigenfunction in interval $z\in[0, 1]$. We take $D=25$ for $N=20, 40$ and $60$, and calculate the absolute error for different $E_0=1, 2, 4$ and $8$ where the ``exact" eigenfunctions are generated by using the numerical results of the PM with $N=300$. One can observe that the error converges with the increase of $N$. With the increase of $E_0$, the error of the RPM increases. This is because $E_0$ describes the optical response strength  in the photorefractive crystal \cite{wang2020localization}. For a large $E_0$, the eigenfunction can become localized, leading to an obvious singularity. The results in panels (cd) illustrate that the RPM remains high accuracy with a small $D$ with $N=60$,
demonstrating that the RPM is efficient for simulating challenging problems such as localization-delocalization transition in photonic moir\'e lattices.

\begin{figure}[t!]
	\centering
	\includegraphics[width=0.725\textwidth]{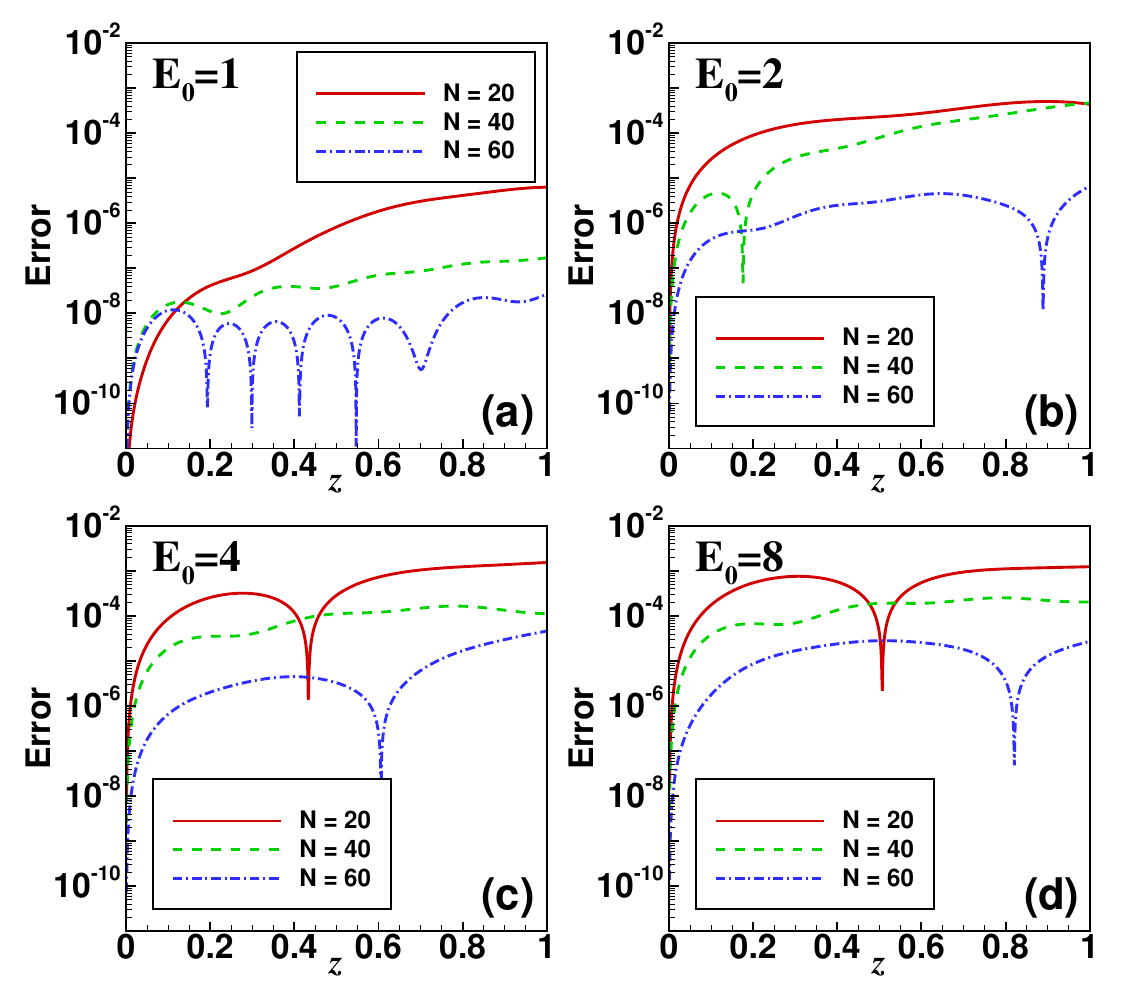}
	\caption{Error of the normalized first eigenfunctions obtained by the RPM in interval $[0, 1]$ for different $E_0$.  In each panel, $D=25$ and three different $N$ are calculated.}\label{fig3-3-3}
\end{figure}

\subsection{2D example}
Consider a 2D  example with the potential function taking 
\begin{equation}\label{eqp}
    v_2(z_1,z_2)=\dfrac{E_0}{(\cos z_1\cos z_2+\cos(\sqrt{5}z_1)\cos(\sqrt{5}z_2))^2+1}.
\end{equation}
This potential possesses the same structure as  2D moir\'e lattices \cite{wang2020localization,gao2023pythagoras}, making it applicable to simulations of photonic lattices.
Correspondingly, the projection matrix is given by,
\begin{equation}
    \mathbf{P}=\begin{bmatrix}
    1 & 0 & \sqrt{5} & 0\\
    0 & 1 & 0 & \sqrt{5}
    \end{bmatrix}.
\end{equation}

\begin{figure}[t!]
	\centering
	\includegraphics[width=0.725\textwidth]{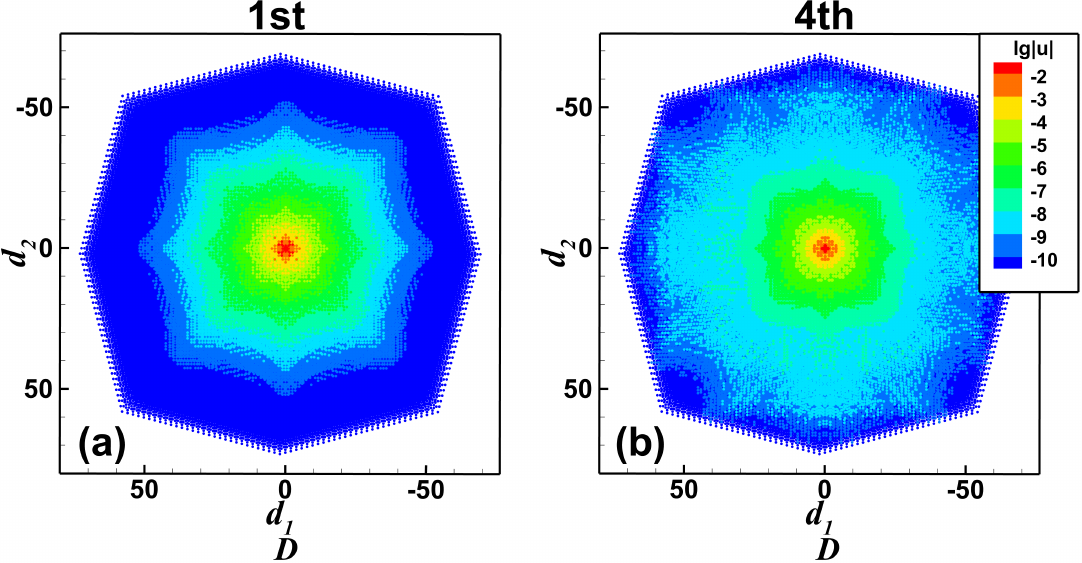}
	\caption{The generalized Fourier coefficients (modulus) of eigenfunctions in the $\bm{q}$ space for $E_0=1$ and $N=30$. Results are present by logarithms with base $10$. }\label{fig3-2}
\end{figure}

We first calculate the generalized Fourier coefficients of eigenfunctions of the system. We set $E_0=1$ and $N=30$. In Figure \ref{fig3-2}, we show the modulus of the coefficients for the 1st and 4th eigenfunctions in the $\bm{q}$ space, calculated by the PM. The data are present with values of logarithms 10. One can observe the exponential decay of the generalized Fourier coefficients for both eigenfunctions. In each panel, there is only one peak in the origin of the $\bm{q}$ space, and far from the origin the contributions of the Fourier modes are insignificant. {Table \ref{tb3} presents the error measured by $\hbox{Err}(D)$ in Eq. \eqref{epsd} of eigenfunctions corresponding to specturm $0.4961$ (the smallest one) and $0.4975$ with the truncation constant $D$ for $N=30$.} Again, one can observe rapid decays with respect to $D$ for both cases. These results are similar to  the 1D case and demonstrate that the approximation in the  reduced space can be of high accuracy for the eigenproblem.  

\begin{table}[htbp]
	\centering
	\fontsize{8}{8}\selectfont
	\begin{threeparttable}
		\caption{Error of eigenfunctions as function of $D$ in two dimensions} 
		\label{tb3}
		\begin{tabular}{cccccc}
			\toprule
			\midrule
                \multirow{3}{*}{\ \ \ \ $D$ \ \ \ \ } & \multicolumn{2}{c}{$\hbox{Err}(D)$}  & \multirow{3}{*}{\ \ \ \ $D$ \ \ \ \ }  & \multicolumn{2}{c}{$\hbox{Err}(D)$} \cr
                  \cmidrule(r){2-3} \cmidrule(r){5-6}
                  \noalign{\smallskip}
                      & \ \ \ \   0.4961 \ \ \ \   &  \ \ \ \  0.4975 \ \ \ \   &  &  \ \ \ \  0.4961 \ \ \ \    &  \ \ \ \ 0.4975 \ \ \ \  \cr
			\midrule
			5 & 1.54E-05 & 2.19E-05 & 35 & 9.93E-15 & 1.58E-13\cr
                10 & 1.19E-07 & 1.16E-07 & 40 & 6.36E-16 & 7.36E-14\cr
                15 & 2.66E-09 & 2.50E-09 & 45 & 4.76E-17 & 3.40E-14\cr
                20 & 6.40E-11 & 7.30E-11 & 50 & 3.79E-18 & 1.52E-14\cr
                25 & 2.91E-12 & 3.20E-12 & 55 & 2.88E-19 & 2.68E-15\cr
                30 & 1.61E-13 & 3.71E-13 & 60 & 2.23E-20 & 1.73E-16\cr
			\midrule
			\bottomrule
		\end{tabular}
	\end{threeparttable}
\end{table}

Figure \ref{fig3-2-2} presents the DOF and condition number of the RPM as function of $D$ with same setup: $E_0=1$ and $N=30$. Again, both the DOF and condition number increase rapidly with the increase of $D$. In spite that $N=30$ is not big, the DOF of the entire system in the raised 4D space, $N^4=810000$, is a very big number. From the results, we can see that the use of a small $D$ can significantly reduce computational complexity, not only the size of the matrix eigenvalue problem, but also the iteration number in the solver of the implicitly restarted Arnoldi method.

\begin{figure}[ht]
	\centering
	\includegraphics[width=0.725\textwidth]{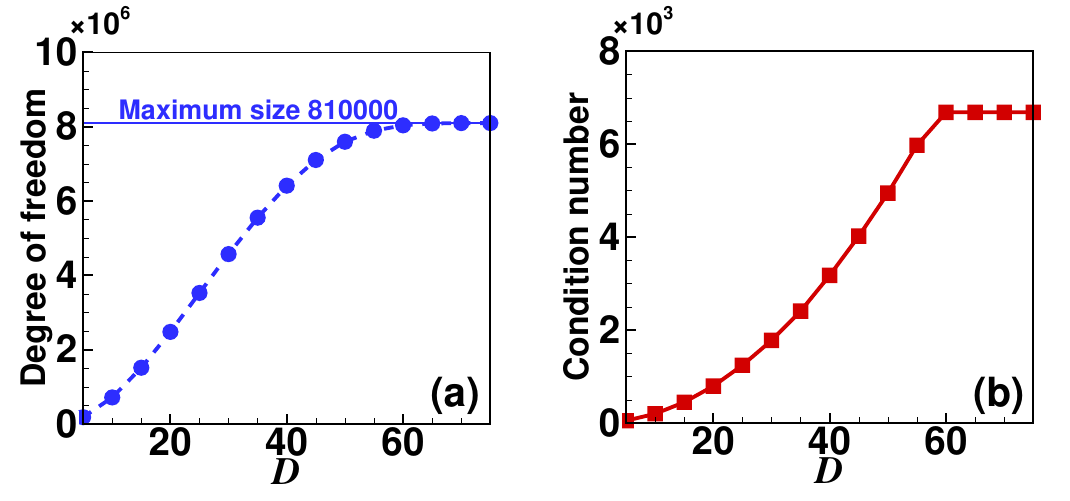}
	\caption{The DOF (a) and the change of condition number (b) with respect to $D$ of the RPM in 2D. $N=30$ and the maximum DOF is $N^4=810000$.}\label{fig3-2-2}
\end{figure}

We next study the accuracy and convergence of the RPM with $E_0=1$, and the results are presented in Figure \ref{fig4-3-1}. In the calculations, the ``exact" eigenvalues and eigenfunctions are obtained from numerical results of the PM with $N=32$. The errors are measured, where $\varepsilon$ represents the absolute error of the first eigenvalue, and $\delta$ represents the error of the first eigenfunction using the $L^2$ norm in interval $[0,1]^2$. Panels (a,b) illustrate the convergence of the  numerical solution with the increase of $N$ for truncation coefficent  $D=10,20$ and $30$. Both $\varepsilon$ and $\delta$ decay exponentially with increasing $N$,  eventually converging to a fixed value which depends on $D$. Similar to the 1D case, small value of $D$ results in high accurate results. For $D=10$ (with a slightly bigger $N$), the RPM can achieve accuracy at the level of $10^{-5}$ in both the eigenvalue and eigenfunction calculation.
Panels (c,d) displays the convergence with the increase of $D$ for $N=20$ and $28$. One can observe  the exponential decays with $D$ at the beginning, as expected from the previous error analysis. For small  $D$, the error curves for $N=20$ and $28$ almost overlap, suggesting that the reduction of the basis space dominates the error. With a mediate $D$, the two curves in both panels differ significantly, indicating that the error at this point mainly comes from the PM part. Overall, the accuracy with small $D$ (e.g., $D=10$) is good enough to provide accurate solutions. These findings demonstrate that high accuracy can be maintained by a significant reduction for basis functions.

\begin{figure}[ht]
	\centering
	\includegraphics[width=0.725\textwidth]{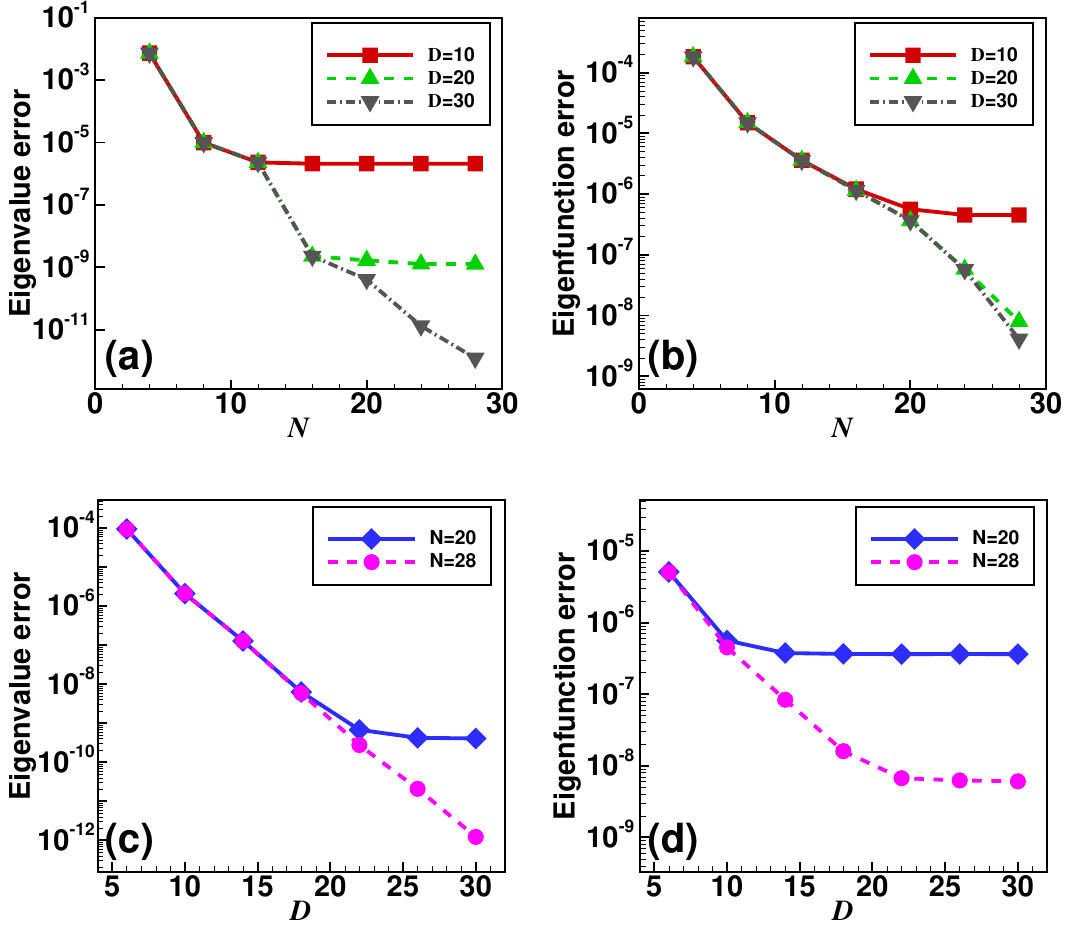}
	\caption{ Errors of the first eigenvalue and eigenfunction. (a,b): Error as function of $N$ for different $D$. (c,d): Error as function of $D$ for different $N$.}\label{fig4-3-1}
\end{figure}

We next conduct the study on the DOF and CPU time required for the RPM for the 2D system with $E_0=1$ for $N=20, 24$ and $28$ with $D$ increasing from $10$ to $30$, and the results are summarized in Table \ref{tb2}. The DOF exhibits a quadratic decrease with respect to $D$. Theoretically, the RPM of 2D systems has complexity $O(D^4)$ for given $N$,  and $O(N^4)$ for given $D$, while the complexity of the original PM is $O(N^8)$. The numerical results of Table \ref{tb2} are in agreement with these theoretical analysis. It also can be found that a small $D$ is able to reach high accuracy. For $N=20$, the use of $D=10$ achieves an error level of $10^{-5}$. In this case, the CPU time for the RPM is $148.50$ seconds which is $15.8$ times faster than that of the PM, and the DOF is $4.9$ times smaller than that of the PM. The reduction for larger $N$ will be more significant. When $N=28$, the speedup with $D=10$ becomes $73.8$ times for the CPU time, and the reduction in the DOF is $9.7$ times, comparing the RPM with the PM. One can see this speedup is even more larger than 1D problems by introducting the reduction technique in the PM.  

\begin{table}[htbp]
	\centering
	\fontsize{7}{7}\selectfont
	\begin{threeparttable}
		\caption{The DOF and CPU time of the RPM for different $N$ and $D$ in two dimensions} 
		\label{tb2}
		\begin{tabular}{cccccccc}
			\toprule
			\midrule
            \multirow{3}{*}{} & \multirow{3}{*}{$D$} & \multicolumn{2}{c}{$N=20$}  & \multicolumn{2}{c}{$N=24$}  & \multicolumn{2}{c}{$N=28$} \cr
                  \cmidrule(r){3-4} \cmidrule(r){5-6} \cmidrule(r){7-8}
                  \noalign{\smallskip} 
                   & & Size & CPU time (s) & Size & CPU time (s) & Size & CPU time (s)\cr
			\midrule
            PM& - & 160000 & 2343.2 & 331776 & 10305 & 614656 & 32901 \cr
			\midrule
			\multirow{11}{*}{RPM}& 30 & 156816 & 2089.2 & 291600 & 6681.5 & 459684 & 15591 \cr
			& 28 & 152100 & 1927.7 & 273529 & 5760.1 & 421201 & 11562\cr
   			& 26 & 145161 & 1803.2 & 252004 & 4807.6 & 379456 & 9889.5\cr
      		& 24 & 135424 & 1552.9 & 227529 & 4318.9 & 335241 & 7631.6\cr
         	& 22 & 123201 & 1284.0 & 200704 & 3144.2 & 291600 & 6118.3\cr
                & 20 & 108900 & 1057.3 & 173889 & 2362.0 & 247009 & 4035.2\cr
                & 18 & 94249 & 786.30 & 145924 & 1895.6 & 202500 & 3363.2\cr
                & 16 & 78400 & 650.76 & 117649 & 1409.2 & 160801 & 1832.3\cr
                & 14 & 62001 & 424.71 & 90601 & 821.09 & 123904 & 1341.1\cr
                & 12 & 46225 & 319.53 & 66564 & 493.80 & 91204 & 763.4\cr
                & 10 & 32400 & 148.50 & 46656 & 304.35 & 63504 & 446.09\cr
                \midrule
                
			\bottomrule
		\end{tabular}
	\end{threeparttable}
\end{table}

Finally, we investigate the performance of the RPM for varying $E_0$. Figure \ref{fig4-3-3} shows the profiles of the first eigenfunctions in 2D quasiperiodic systems for various $E_0$ and $N$. With the increase of $E_0$, the eigenfunction becomes singular, leading to a localized eigenstate. This phenomenon is reminiscent of the localization-delocalization transition exhibited in experimental studies of 2D photonic moir\'e lattices \cite{wang2020localization}. The moir\'e lattices rely on flat-band structures for wave localization as opposed to the disordered media used in other approaches based on light diffusion in photonic quasicrystals \cite{freedman2006wave,levi2011disorder}. The localization-delocalization transition of the eigenstates in 2D systems provide valuable insight into the exploration of quasicrystal structures. This transition process is displayed in Figure \ref{fig4-3-3}. The figure also illustrates that the results of different $N$ are basically the same for the four different $E_0$, indicating that the RPM converges fast for cases of both low and strong strength of optical response. Moreover, because of the lower DOF of the RPM, one can expect that more numerical nodes in each dimension can be applied to achieve higher accuracy of approximation.

\begin{figure}[ht]
	\centering
	\includegraphics[width=0.85\textwidth]{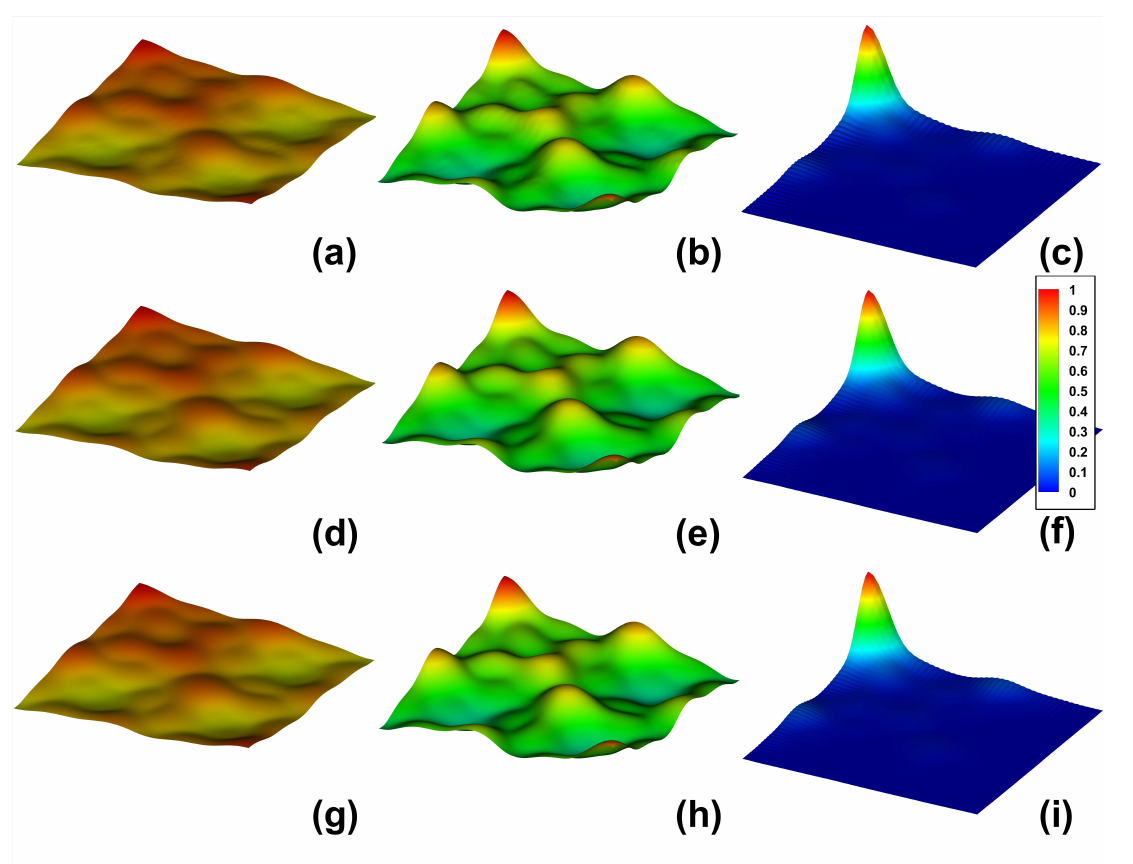}
	\caption{The normalized first eigenfunction $|u|$ in 2D under different $E_0$ and $N$. $D=30$ is taken and the results in area $[0,10]^2$. (a,b,c): $N=24$, (d,e,f): $N=26$ and (g,h,i): $N=28$. (a,d,g): $E_0=0.25$, (b,e,h): $E_0=1$ and (c,f,i): $E_0=4$.}\label{fig4-3-3}
\end{figure}

\section{Conclusions}\label{s5}
{We have proposed a reduced projection method (RPM) for accurate and fast calculations of eigenvalue problems for photonic quasicrystals. We show that the fast decay of the generalized Fourier coefficients for eigenfunctions of the quasiperiodic problems,} justifying the efficiency of the RPM. The error bound of the approximation is provided, which demonstrates the high accuracy from theoretical point of view. Compared to the original PM, the reduced method requires much less memory and significantly speeds up the calculation, making it possible to calculate high-dimensional quasiperiodic eigenvalue problems. Numerical results in both 1D and 2D problems show the efficiency and accuracy of the algorithm, demonstrating attractive features for a broader applications for practical problems. The RPM is potentially useful to solve 3D or other problems rather than the Schr\"odinger system, which will be reported in our future work.

\begin{acknowledgements}
Z. G. and Z. X. are supported by the National Natural Science Foundation of China (NNSFC)(grants No. 12325113 and 12071288), Science and Technology Commission of Shanghai Municipality (grant No. 21JC1403700), and the HPC center of Shanghai Jiao Tong University. Z. Y. is supported by the NNSFC (No. 12101399) and the Shanghai Sailing Program (No. 21YF1421000).  The authors also thank Professor Huajie Chen from Beijing Normal University for helpful discussions.
\end{acknowledgements}

\bibliographystyle{siamplain}
\bibliography{RPMfi}

\end{document}